\def\N{{\mathbb{N}}}
\def\R{{\mathbb{R}}}
\def\E{{\mathbb{E}}}
\def\X{{\mathbb{X}}}
\def\Y{{\mathbb{Y}}}
\def\A{{\mathbb{A}}}
\def\G{{\mathbb{G}}}
\def\W{{\mathbb{W}}}
\def\S{{\mathbb{S}}}
\def\A{{\mathbb{A}}}
\def\G{{\mathbb{G}}}
\begin{document}
\title[envelope]{Envelope theorems for static optimization and Calculus of Variations}

\author[J. Blot, H. Yilmaz]
{Jo${\rm \ddot e}$l Blot, Hasan Yilmaz } 

\address{Jo\"{e}l Blot: Laboratoire SAMM EA4543,\newline
Universit\'{e} Paris 1 Panth\'{e}on-Sorbonne, centre P.M.F.,\newline
90 rue de Tolbiac, 75634 Paris cedex 13,
France.}
\email{blot@univ-paris1.fr}

\address{Hasan Yilmaz: Laboratoire LPSM, UMR 8001, \newline
Universit\'e de Paris, b\^atiment Sophie Germain, \newline
8 place Aur\'elie Nemours, 75013 Paris, France.}
\email{yilmaz.research@gmail.com}

\date{August, 23, 2021}

\begin{abstract}
We establish differentiability properties of the value function of problems of Static Optimization in an abstract infinite dimensional setting and we apply that to problems of Calculus of Variations. We lighten the assumptions of existing results, notably by using G\^ateaux and Hadamard differentials. Moreover we use recently established Multipliers Rules. 
\end{abstract}

\maketitle
\numberwithin{equation}{section}
\newtheorem{theorem}{Theorem}[section]
\newtheorem{lemma}[theorem]{Lemma}
\newtheorem{example}[theorem]{Example}
\newtheorem{remark}[theorem]{Remark}
\newtheorem{definition}[theorem]{Definition}
\newtheorem{proposition}[theorem]{Proposition}
\newtheorem{corollary}[theorem]{Corollary}
\noindent
{Key Words.} Envelope theorem, Static optimization, Calculus of Variations.
\vskip1mm
\noindent
{MSC 2020.} 90C31, 90C30, 49J50.
\section{Introduction}
The paper treats of envelope theorems for parameterized problems of Static Optimization as 
\[
(\mathcal{M}, \pi)
\left\{
\begin{array}{rl}
{\rm Maximize} & f(x, \pi)\\
{\rm subject} \; {\rm to} & x \in \G \\
\null & \forall i = 1,...,k, \; g_i(x, \pi) \geq 0 \\
\null & \forall j = 1,..,{\ell}, \; h_j(x, \pi) = 0,
\end{array}
\right.
\]
and for parameterized problems of Calculus of Variations (Lagrange problems) under parameterized constraints as
\[
(\mathcal{V}, \pi)
\left\{
\begin{array}{rl}
{\rm Maximize} & J(x,\pi) := \int_0^T L(t,x(t), x'(t), \pi) dt\\
{\rm subject} \; {\rm to} & x \in C^1([0,T], M), x(0) = a_0, x(T) = a_T\\
\null & \forall i = 1,...,k, \; G_i(x, \pi) := \int_0^T \mathfrak{g}_i(t, x(t), x'(t), \pi) dt \geq 0\\
\null & \forall j = 1,...,{\ell}, \; H_j(x, \pi) := \int_0^T \mathfrak{h}_j(t, x(t), x'(t), \pi) dt = 0.
\end{array}
\right.
\]
We precise all the elements of these problems in the following sections.\\
We denote by $V(\pi)$ the value function of one of these problems. An envelope theorem consists to provide conditions to ensure the differentiability of $V$ (in a meaning that we will specify) and to provide an expression of this differential.\\
When $\pi_0$ is a fixed value of the parameter, in an envelope theorem, it is assumed that the optimal solution exists for all the parameters which belong to a neighborhood of $\pi_0$, not only for $\pi_0$. This is the difference between the problem of the envelope theorem and the problem of the regular perturbations where the theory provides conditions to ensure the existence of solutions $x(\pi)$ when $\pi$ is near to $\pi_0$, \cite{BS}.\\
The envelope theorems are classical fundamental tools of the economic theory; cf. \cite{Bo}, \cite{Cap}, \cite{Car}, \cite{FP}, \cite{LB}, \cite{LL}, \cite{MS}, \cite{Si1}, \cite{Si2}, \cite{MRT} for example.
\vskip1mm
\noindent
Our contribution is to establish envelope theorems with lightened assumptions with respect to the existing results. Notably we don't assume the regularity of the multipliers which are associated to the solutions of these optimization problems, and we prove their continuity; their differentiability is unnecessary. We use the Hadamard differentiability instead of the continuous Fr\'echet differentiability that is weaker in infinite dimension. Note that to avoid to assume that the multipliers are of class $C^1$ is labelled as a conjecture in \cite{Bo} (p. 7).\\
To realize this aim, we use recent multipliers rules established in \cite{Yi} which contains an improvment of the results of \cite{Bl} and an improvment of clasical results in the setting Fr\'echet $C^1$ as exposed in \cite{ATF}, we establish new results on the functionals under an integral form, we provide a variation of the Gram-Schmidt method, we provide informations on the dual space of an useful subspace of the space of the continuously differentiable functions defined on a segment of $\R$.
\vskip1mm
\noindent
Now we describe the contents of the paper. In Section 2, we specify our notation, and we provide some preliminaries. In Section 3, we state an envelope theorem for the parameterized problems of Static Optimization in infinite dimension. \\
In Section 4, we prove the results of Section 3. In a first subsection, we establish results of topological algebra, we built a variation of the Gram-Schmidt method to permit us to prove the continuity of the multipliers (instead to assume it).\\
In Section 5, we establish an envelope theorem for the parameterized problems of Calculus of Variations. In a first subsection, we state an envelope theorem. In a second subsection, we establish new results on the differentiability of nonlinear integral functionals. In a third subsection, we establish results on the Euler equation. In a fourth subsection, we study dual spaces which are useful to our results, In the last subsection, we give a proof of our envelope theorem.
%%%%%%%%%%%%%%%%%%%%%%%%%%%%%%%%%%%%%%%%%%%
\section{Notation and Preliminaries}
$\X$ and $\Y$ are two real normed spaces.
%%%%%%%%%%%%%%%%%%%%%%%%%%%%%%%%%%%%%%%%%%%
\subsection{Notions of differentiability.} $\mathcal{L}(\X, \Y)$ denotes the space of the continuous linear mappings from $\X$ into $\Y$. The topological dual space of $\X$ is denoted by $\X^*$. Generally the norm of the dual spaces will be denoted by $\Vert \cdot \Vert_*$. When $\A \subset \X$, $C^0(\A, \Y)$ denotes the space of the continuous mappings from $\A$ into $\Y$.\\
%
%, and $UC(\A, \Y)$ denotes the space of the uniformly continuous mappings from $\A$ into $\Y$.\\
Let $\G$ be an open subset of $\X$, $f : \G \rightarrow \Y$ be a mapping, $x \in \G$ and $v \in \X$. When it exists, the right-directional derivative (also called the right G\^ateaux variation) of $f$ at $x$ in the direction $v$ is $D^+_G f(x;v) := \lim\limits_{\theta \rightarrow 0+} \frac{1}{\theta}(f(x + \theta v) - f(x))$. When $D^+_G f(x;v)$ exists for all $v \in \X$ and when $D^+_Gf(x; \cdot) \in \mathcal{L}(\X, \Y)$, the G\^ateaux differential of $f$ at $x$ is $D_Gf(x) \in \mathcal{L}(\X,\Y)$ defined by $D_Gf(x) \cdot v := D^+_G f(x;v)$.\\
We say that $f$ possesses a Hadamard variation at $x$ for the increment $v$ when there exists $D^+_Hf(x;v) \in \Y$ such that, for all $(\theta_n)_n \in \; ] 0, + \infty[^{\N}$ converging to $0$, and for all $(v_n)_n \in \X^{\N}$ converging to $v$, we have $\lim\limits_{n \rightarrow + \infty} \frac{1}{\theta_n}(f(x + \theta_n v_n) - f(x)) = D^+_Hf(x;v)$. When $D^+_Hf(x;v)$ exists for all $v \in \X$, and when $D^+_H f(x; \cdot) \in \mathcal{L}(\X,\Y)$, the Hadamard differential of $f$ at $x$ is $D_H f(x) \in \mathcal{L}(\X,\Y)$ defined by $D_Hf(x) \cdot v := D^+_Hf(x;v)$.\\
When it exists, $D_Ff(x)$ denotes the Fr\'echet differential of $f$ at $x$. $C^1(\G, \Y)$ denotes the space of the continuously Fr\'echet differentiable mappings from $\G$ into $\Y$. When $T \in \; ]0, + \infty[$, $C^1([0,T], \G)$ denotes the space of the continuously differentiable functions from $[0,T]$ into $\G$, and when $\xi_0, \xi_T \in \X$, $C^1_{\xi_0,\xi_T}([0,T], \G)$ denotes the space of the functions $x \in C^1([0,T], \G)$ such that $x(0) = \xi_0$ and $x(T) = \xi_T$. $C^{\infty}_c([0,T], \R^n)$ denotes the space of the infinitely differentiable functions from $[0,T]$ into $\R^n$ such that their support is included into $\, ]0,T[$.\\
When $\X$ is a finite product of $m$ real normed spaces, $\X = \prod_{1 \leq i \leq m} \X_i$, if $k \in \{ 1,...,m \}$, $D_{F,k}f(x)$ (respectively $D_{H,k} f(x)$, respectively $D_{G,k} f(x)$) denotes the partial Fr\'echet (respectively Hadamard, respectively G\^ateaux) differential of $f$ at $x$ with respect to the $k$-th vector variable. If $1 \leq k_1 \leq k_2 \leq k_3 \leq m$, $D_{H, (k_1,k_2,k_3)} f(x)$ denotes the Hadamard differential of the mapping \\
$[(x_{k_1}, x_{k_2}, x_{k_3}) \mapsto f(x_1,.., x_{k_1}, ..., x_{k_2}, ..., x_{k_3}, ...,x_m)]$ at the point $(x_{k_1}, x_{k_2}, x_{k_3})$. We refer to \cite{Fl} for all these notions of differentiability.
%%%%%%%%%%%%%%%%%%%%%%%%%%%%%%%%%%%%%%%%%%%%
\subsection{Bounded variation functions.} We consider the functions from $[0,T]$ into a normed vector space which are of bounded variation (cf. \cite{La2}(Chapter 10, Section 1). 
$BV([0,T], \X)$ denotes the set of such functions. $C^0_R(\; ]0,T[ \;, \X)$ denotes the set of the functions from $[0,T]$ into $\X$ which are right-continuous on \;$]0,T[$. We define $NBV([0,T], \X) := \{ g \in BV([0,T], \X) \cap C^0_R(\; ]0,T[ \;, \X) : g(0) = 0 \}$. A function of $NBV([0,T], \X)$ is called a normalized bounded variation function. $\Vert g \Vert_{BV} := V_0^T(g)$ defines a norm on $NBV([0,T], \X)$. If $\X$ is finite dimensional, if $(e_i)_{1 \leq i \leq d}$ is a basis of $\X$, and $(e_i^*)_{1 \leq i \leq d}$ is its dual basis, $g \in NBV([0,T], \X)$ if and only if $e_i^* \circ g \in NBV([0,T], \R)$, and so we can use the results on scalar-valued bounded variation functions as given in \cite{KF}, \cite{KA}.
When $d = dim \X < + \infty$, $f \in AC([a,b], \X)$ (AC means: absolutely continuity) if and only if $e_i^* \circ f \in AC([a,b], \R)$ 
for all $i \in \{1,...,d \}$, and so we can use the results on the scalar-valued absolutely continuous functions as given in \cite{KF}, \cite{BGH}.
%%%%%%%%%%%%%%%%%%%%%%%%%
\subsection{Integrals.} When $\E$ is a real Banach space, $a < b$ in $\R$, and $f : [a,b] \rightarrow \E$ is a function, the Riemann integral of $f$ on $[a,b]$ is written $\int_a^b f(t) \, dt$ (cf. \cite{Di}, chapter 8).\\
We denote by $\mathcal{B}([a,b])$ the Borel tribe of $[a,b]$, and by $\mathfrak{m}_1$ the  canonical positive measure of Borel on $\mathcal{B}([a,b])$, charactarized by $\mathfrak{m}_1([ \alpha, \beta[) = \beta- \alpha$ when $\alpha < \beta$ belong to $[a,b]$. When $f : [a,b] \rightarrow \R$ is a Borel function which is $\mathfrak{m}_1$-integrable, we say that $f$ is Borel integrable on $[a,b]$, and we denote its Borel integral by $\int_{[a,b]} f \, d \mathfrak{m}_1$. We denote the set of such functions by $\mathcal{L}^1([a,b], \mathcal{B}([a,b]), \mathfrak{m}_1; \R)$. Conformly to \cite{Ru}, we don't use the term "Lebesgue integral" since we don't use the (completed) Lebesgue tribe, but only the Borel tribe.\\
When $\E$ is finite dimensional, if $(e_i)_{1 \leq i \leq d}$ is a basis of $\E$, and $(e_i^*)_{1 \leq i \leq d}$ its dual basis, a function $f : [a,b] \rightarrow \E$ is said to be Borel integrable on $[a,b]$ if and only if $e_i^* \circ f \in \mathcal{L}^1([a,b], \mathcal{B}([a,b]), \mathfrak{m}_1; \R)$ for all $i \in \{1,...,n \}$. We denote by $\mathcal{L}^1([a,b], \mathcal{B}([a,b]), \mathfrak{m}_1; \E)$ the space of such functions.\\
%%%%%%%%%%%%%%%%%%%%%%%%%%%%%%%%%%%%%%%%%%%%%%%%%%%%%%%%%%%%%%%%%%
\section{Static Optimization}
Let $\X$ and $\Y$ be two real normed spaces, and $\G$ be an open subset of $\X$. We consider the following functions $f : \G \times \Y \rightarrow \R$, $g_i : \G \times \Y \rightarrow \R$ for all $i \in \{1,...,k \}$, and $h_j : \G \times \Y \rightarrow \R$ for all $j \in \{ 1,...,{\ell} \}$. We denote by $V(\pi)$ the value of the problem $(\mathcal{M}, \pi)$ when $\pi \in \Y$. \\
Let $\pi_0 \in \Y$. We consider the following conditions.
\vskip1mm
\noindent
\underline{ {\bf Condition on the dual space $\X^*$.}}\\
{\bf (A{\sc dua})} There exists  $( \cdot \mid \cdot )_{\X^*} \in C^0(\X^* \times \X^*, \R)$ which is an inner product on $\X^*$.\\
\underline{ {\bf Conditions on the solutions.}}\\
{\bf (A{\sc sol}1)} There exists an open neighborhood $P$ of $\pi_0$ in $\Y$ such that, for all $\pi \in P$, there exists a solution $\underline{x}(\pi)$ of $(\mathcal{M}, \pi)$.\\
{\bf (A{\sc sol}2)} There exists $\varpi \in \Y$ s.t. $D^+_G\underline{x}(\pi_0; \varpi)$ exists.\\
{\bf (A{\sc sol}2-bis)} $\forall \varpi \in \Y$, $D^+_G\underline{x}(\pi_0; \varpi)$ exists.\\
{\bf (A{\sc sol}2-ter}) $\forall \pi \in P$, $\forall \varpi \in \Y$, $D^+_G\underline{x}(\pi; \varpi)$ exists.\\
\underline{{\bf Conditions on the functions of the criterion and of the constraints.}}\\
{\bf (A{\sc fon}1)} $\forall \psi \in \{ f \} \cup \{ g_i : 1 \leq i \leq k \} \cup \{ h_j : 1 \leq j \leq {\ell} \}$, $D_H \psi( \underline{x}(\pi_0), \pi_0)$ exists, and, for all $\pi \in P$, $D_{H,1} \psi(\underline{x}(\pi), \pi)$ exists and $[ \pi \mapsto D_{H,1} \psi(\underline{x}(\pi), \pi)]$ is continuous on $P$.\\
{\bf (A{\sc fon}2)}  $\forall \psi \in \{ f \} \cup \{ g_i : 1 \leq i \leq k \} \cup \{ h_j : 1 \leq j \leq {\ell} \}$, $\forall \pi \in P$, $D_{H}\psi(\underline{x}(\pi), \pi)$ exists and $[ \pi \mapsto D_{H,2} \psi( \underline{x}(\pi), \pi)]$ is continuous on $P$.\\
\underline{ {\bf Conditions on the constraints functions only.}}\\
{\bf (A{\sc con}1)} $\forall \pi \in P, \forall j \in \{1,...,{\ell} \}, [x \mapsto h_j(x,\pi)]$ is continuous on a neighborhood of $\underline{x}(\pi)$.\\
{\bf (A{\sc con}2)} $D_{H,1}g_1(\underline{x}(\pi_0), \pi_0)$,..., $D_{H,1}g_k(\underline{x}(\pi_0), \pi_0), D_{H,1}h_1(\underline{x}(\pi_0), \pi_0)$,...,\\
$D_{H,1}h_{\ell}(\underline{x}(\pi_0), \pi_0)$ are linearly independent.\\ 
\begin{theorem}\label{th31} {\bf (Envelope Theorem).} We assume that (A{\sc dua}), (A{\sc sol}1),\\
 (A{\sc sol}2), (A{\sc fon}1), (A{\sc con}1), (A{\sc con}2) are fulfilled. Then the following assertions hold.
\begin{itemize}
\item[{\bf (i)}] The right derivative of the value function of $(\mathcal{M}, \pi)$ at $\pi_0$ in the direction $\varpi$ exists and 
$D^+_GV(\pi_0; \varpi)  =  D_{H,2} f(\underline{x}(\pi_0), \pi_0) \cdot \varpi +$\\
$ \sum\limits_{1 \leq i \leq k} \lambda_i(\pi_0) D_{H,2} g_i(\underline{x}(\pi_0), \pi_0) \cdot \varpi
 + \sum\limits_{1 \leq j \leq {\ell}} \mu_j(\pi_0) D_{H,2} h_j(\underline{x}(\pi_0), \pi_0) \cdot \varpi$,\\
where $\lambda_1(\pi_0)$,..., $\lambda_k(\pi_0)$, $\mu_1(\pi_0)$, ..., $\mu_{\ell}(\pi_0)$ are the Karush-Kuhn-Tucker multipliers associated to the solution $\underline{x}(\pi_0)$ of $(\mathcal{M}, \pi_0)$.
\item[{\bf (ii)}] If in addition we assume (A{\sc sol}2-bis) instead of (A{\sc sol}2), then the value function is G\^ateaux differentiable at $\pi_0$, and 
$D_GV(\pi_0)  =  D_{H,2} f(\underline{x}(\pi_0), \pi_0) +$\\
$ \sum\limits_{1 \leq i \leq k} \lambda_i(\pi_0) D_{H,2} g_i(\underline{x}(\pi_0), \pi_0)
 + \sum\limits_{1 \leq j \leq {\ell}} \mu_j(\pi_0) D_{H,2} h_j(\underline{x}(\pi_0), \pi_0).$
\item[{\bf (iii)}] If in addition we assume that (A{\sc sol}2-ter) and (A{\sc fon}2) are fulfilled, then the value function is of class Fr\'echet $C^1$ on an open neighborhood $Q$ of $\pi_0$, and for all $\pi \in Q$, we have
$D_FV(\pi)  =  D_{H,2} f(\underline{x}(\pi), \pi) +$\\
$ \sum\limits_{1 \leq i \leq k} \lambda_i(\pi) D_{H,2} g_i(\underline{x}(\pi), \pi)
 + \sum\limits_{1 \leq j \leq {\ell}} \mu_j(\pi) D_{H,2} h_j(\underline{x}(\pi), \pi),$\\
where $\lambda_1(\pi)$,..., $\lambda_k(\pi)$, $\mu_1(\pi)$, ..., $\mu_{\ell}(\pi)$ are the Karush-Kuhn-Tucker multipliers associated to the solution $\underline{x}(\pi)$ of $(\mathcal{M}, \pi)$.

\end{itemize}
\end{theorem}
We can adapt this result to problems where the domains of the functions are affine sets instead of to be vector spaces.
Now we consider an affine subset $\A \subset \X$. The director vector subspace of $\A$ is denoted by $\S$. $\G^0$ is an open subset of $\A$. We consider the following functions. $f^0 : \G^0 \times \Y \rightarrow \R$, $g_i^0 : \G^0 \times \Y \rightarrow \R$ when $i \in \{1,...,k \}$, and $h_j^0 : \G^0 \times \Y \rightarrow \R$ when $j \in \{ 1,...,{\ell} \}$. With these elements, when $\pi \in \Y$, we build the following problem.
\[
(\mathcal{M}^0, \pi)
\left\{
\begin{array}{rl}
{\rm Maximize} & f^0(z, \pi)\\
{\rm subject} \; {\rm to} &  z \in \G^0\\
\null & \forall i = 1,...,k, \; g_i^0(z, \pi) \geq 0 \\
\null & \forall j = 1,..,{\ell}, \; h_j^0(z, \pi) = 0,
\end{array}
\right.
\]
$V^0(\pi)$ denotes the value of $(\mathcal{M}^0, \pi)$. Let $\pi_0 \in \Y$. We consider the following list of conditions.\\
\underline{{\bf Condition on the dual space $\S^*$.}}\\
{\bf ($A^0${\sc dua})} There exists  $( \cdot \mid \cdot )_{\S^*} \in C^0(\S^* \times \S^*, \R)$ which is an inner product on $\S^*$.\\
\underline{{\bf Conditions on the solutions.}}\\
{\bf ($A^0${\sc sol}1)} There exists an open neighborhood $P$ of $\pi_0$ in $\Y$ such that, for all $\pi \in P$, there exists a solution $\underline{z}(\pi)$ of $(\mathcal{M}^0, \pi)$.\\
{\bf ($A^0${\sc sol}2)} There exists $\varpi \in \Y$ s.t. $D^+_G\underline{z}(\pi_0; \varpi)$ exists.\\
{\bf ($A^0${\sc sol}2-bis)} $\forall \varpi \in \Y$, $D^+_G\underline{z}(\pi_0; \varpi)$ exists.\\
{\bf ($A^0${\sc sol}2-ter}) $\forall \pi \in P$, $\forall \varpi \in \Y$, $D^+_G\underline{z}(\pi; \varpi)$ exists.\\
\underline{{\bf Conditions on the functions of the criterion and of the constraints.}}\\
{\bf ($A^0${\sc fon}1)} $\forall \psi \in  \{ f^0 \} \cup \{ g_i^0 : 1 \leq i \leq k \} \cup \{ h_j^0 : 1 \leq j \leq {\ell} \}$, $D_H \psi( \underline{z}(\pi_0), \pi_0)$ exists, and, for all $\pi \in P$, $D_{H,1} \psi(\underline{z}(\pi), \pi)$ exists and $[ \pi \mapsto D_{H,1} \psi(\underline{z}(\pi), \pi)]$ is continuous on $P$.\\
{\bf ($A^0${\sc fon}2)}  $\forall \psi \in  \{ f^0 \} \cup \{ g_i^0 : 1 \leq i \leq k \} \cup \{ h_j^0 : 1 \leq j \leq {\ell} \}$, $\forall \pi \in P$, $D_{H}\psi(\underline{z}(\pi), \pi)$ exists and $[ \pi \mapsto D_{H,2} \psi( \underline{z}(\pi), \pi)]$ is continuous on $P$.\\
\underline{{\bf Conditions on the constraints functions only.}}\\
{\bf ($A^0${\sc con}1)} $\forall \pi \in P, \forall j \in \{1,...,{\ell} \}, [z \mapsto h_j^0(z,\pi)]$ is continuous on a neighborhood of $\underline{z}(\pi)$.\\
{\bf ($A^0${\sc con}2)} $D_{H,1}g_1(\underline{z}(\pi_0), \pi_0)$,..., $D_{H,1}g_k(\underline{z}(\pi_0), \pi_0), D_{H,1}h_1(\underline{z}(\pi_0), \pi_0)$,...,\\
$D_{H,1}h_{\ell}(\underline{z}(\pi_0), \pi_0)$ are linearly independent.\\ 
The differentials on the affine subset $\A$ are defined (when they exist) on the director vector subset $\S$ which is the tangent vector space at each point of $\A$.
\begin{corollary}\label{cor32}
We assume that ($A^0${\sc dua}), ($A^0${\sc sol}1), ($A^0${\sc sol}2), ($A^0${\sc fon}1),\\
($A^0${\sc con}1), ($A^0${\sc con}2) are fulfilled. Then the following assertions hold.
\begin{itemize}
\item[{\bf ($\alpha$)}] $D^+_G V^0(\pi_0; \varpi)$ exists and there exists $(\lambda_i(\pi_0))_{1 \leq i \leq k} \in \R_+^k$ and\\
 $(\mu_j(\pi_0))_{1 \leq j \leq {\ell}} \in \R^{\ell}$ such that $
D^+_G V^0(\pi_0; \varpi)  =  D_{H,2} f^0(\underline{z}(\pi_0), \pi_0) \cdot \varpi  +\\ \sum_{1 \leq i \leq k} \lambda_i(\pi_0) D_{H,2} g^0_i(\underline{z}(\pi_0), \pi_0) \cdot \varpi + \sum_{1 \leq j \leq {\ell}} \mu_j(\pi_0) D_{H,2} h^0_j(\underline{z}(\pi_0), \pi_0) \cdot \varpi$.
\item[{\bf ($\beta$)}] If in addition we assume ($A^0${\sc sol}2-bis) instead ($A^0${\sc sol}2), the value function is G\^ateaux differentiable at $\pi_0$ and we have
$D_G V^0(\pi_0)  =  D_{H,2} f^0(\underline{z}(\pi_0), \pi_0) \\ + \sum_{1 \leq i \leq k} \lambda_i(\pi_0) D_{H,2} g^0_i(\underline{z}(\pi_0), \pi_0) + \sum_{1 \leq j \leq {\ell}} \mu_j(\pi_0) D_{H,2} h^0_j(\underline{z}(\pi_0), \pi_0)$.
\item[{\bf ($\gamma$)}] If in addition we assume that ($A^0${\sc sol}2-ter) and ($A^0${\sc fon}2) are fulfilled, then the value function is of class Fr\'echet $C^1$ on an open neighborhood $Q$ of $\pi_0$, and, for all $\pi \in Q$, there exist $((\lambda_i(\pi))_{1\leq i\leq k},(\mu_j(\pi))_{1\leq j\leq {\ell}})\in \R^k_+\times \R^{\ell}$ s.t.
$D_F V^0(\pi)  =  D_{H,2} f^0(\underline{z}(\pi), \pi)  +\\
 \sum_{1 \leq i \leq k} \lambda_i(\pi) D_{H,2} g^0_i(\underline{z}(\pi), \pi) + \sum_{1 \leq j \leq {\ell}} \mu_j(\pi) D_{H,2} h^0_j(\underline{z}(\pi), \pi)$.
\end{itemize}
\end{corollary}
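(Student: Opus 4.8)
The plan is to deduce Corollary~\ref{cor32} from Theorem~\ref{th31} by transporting the affine problem $(\mathcal{M}^0,\pi)$ onto a vector-space problem to which the theorem directly applies. The key observation is that the director subspace $\S$ of the affine set $\A$ carries the linear structure, and the differentials on $\A$ are by definition the differentials along $\S$. So I would fix any base point $x_0 \in \A$ and introduce the affine chart $\Phi : \S \to \A$, $\Phi(u) := x_0 + u$, which is an affine homeomorphism with $\Phi(0)=x_0$ and whose linear part is the identity inclusion of $\S$. Setting $\G := \Phi^{-1}(\G^0)$, an open subset of the normed space $\S$, I define the pulled-back data $f(u,\pi) := f^0(\Phi(u),\pi)$, $g_i(u,\pi) := g_i^0(\Phi(u),\pi)$, $h_j(u,\pi) := h_j^0(\Phi(u),\pi)$. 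Since $\Phi$ is affine with constant derivative equal to the inclusion $\S \hookrightarrow \X$, the chain rule for each of the notions of differentiability (G\^ateaux, Hadamard, Fr\'echet) gives $D_{H,1}f(u,\pi) = D_{H,1}f^0(\Phi(u),\pi)\restriction_{\S}$ and, crucially, the partial differential in the parameter is unchanged: $D_{H,2}f(u,\pi) = D_{H,2}f^0(\Phi(u),\pi)$, and likewise for the $g_i$ and $h_j$. This identification is the conceptual heart of the argument and the step I expect to require the most care, because I must verify that each differentiability hypothesis survives the pullback \emph{and} that the inner-product hypothesis transfers correctly.

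Next I would check, one by one, that the pulled-back data satisfy the hypotheses of Theorem~\ref{th31} with $\X$ replaced by $\S$. The solution map transfers as $\underline{x}(\pi) := \Phi^{-1}(\underline{z}(\pi)) = \underline{z}(\pi) - x_0$; since $\Phi^{-1}$ is affine with linear part the identity, $D^+_G\underline{x}(\pi_0;\varpi) = D^+_G\underline{z}(\pi_0;\varpi)$ whenever either exists, so ($A^0${\sc sol}1)--($A^0${\sc sol}2) yield (A{\sc sol}1)--(A{\sc sol}2), and similarly for the bis and ter variants. Conditions ($A^0${\sc fon}1)--($A^0${\sc fon}2) and ($A^0${\sc con}1) transfer by the chain-rule identities above together with continuity of $\Phi$. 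For the linear independence ($A^0${\sc con}2) I would use that $\Phi^{-1}$ restricted to directions is the identity on $\S$, so $D_{H,1}g_i^0(\underline{z}(\pi_0),\pi_0)$ and its pullback $D_{H,1}g_i(\underline{x}(\pi_0),\pi_0)$ are the same linear forms on $\S$; hence they are linearly independent as elements of $\S^*$ exactly when the original family is, giving (A{\sc con}2). Finally ($A^0${\sc dua}) is literally (A{\sc dua}) for the space $\S$, since the theorem's dual-space hypothesis is imposed on $\X^*$ and here the ambient normed space of the transported problem \emph{is} $\S$.

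With all hypotheses verified, I would invoke Theorem~\ref{th31} applied to the problem $(\mathcal{M},\pi)$ over $\S$. A short lemma records that the value functions coincide, $V^0(\pi) = V(\pi)$ for all $\pi$ in the common neighborhood, because $\Phi$ is a bijection between the feasible set of $(\mathcal{M}^0,\pi)$ and that of $(\mathcal{M},\pi)$ preserving the objective value; consequently $D^+_G V^0(\pi_0;\varpi) = D^+_G V(\pi_0;\varpi)$, and likewise for the G\^ateaux and Fr\'echet differentials. The three conclusions (i), (ii), (iii) of the theorem then read off directly as ($\alpha$), ($\beta$), ($\gamma$), after substituting the identities $D_{H,2}f(\underline{x}(\pi),\pi) = D_{H,2}f^0(\underline{z}(\pi),\pi)$ (and the analogous ones for $g_i,h_j$) and noting that the Karush--Kuhn--Tucker multipliers are intrinsic: the stationarity condition defining them involves only the first-variable differentials along $\S$, which are identified under the pullback, so the multipliers $\lambda_i(\pi),\mu_j(\pi)$ for the two problems agree, with $\lambda_i(\pi)\in\R_+$ and $\mu_j(\pi)\in\R$ as asserted.

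The main obstacle, as flagged, is the transfer of the three differentiability notions through the affine chart while keeping track of the distinction between the full differential $D_H\psi$ (on $\S\times\Y$, or $\A\times\Y$) and its two partial differentials; I would isolate the needed chain-rule statements as a preliminary remark so that the verification of the six hypotheses becomes a routine checklist rather than six independent computations. Everything else is bookkeeping once the identity $D_{H,2}(\psi\circ(\Phi\times\mathrm{id}_\Y)) = D_{H,2}\psi\circ(\Phi\times\mathrm{id}_\Y)$ and its G\^ateaux and Fr\'echet analogues are in hand.
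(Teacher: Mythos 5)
Your proposal is correct and follows essentially the same route as the paper: translate the affine problem into a vector-space problem over $\S$ by an affine change of variable (the paper uses the specific base point $\underline{z}(\pi_0)$ where you allow any $x_0\in\A$, an immaterial difference), verify that each hypothesis and the value function transfer unchanged, apply Theorem \ref{th31}, and translate back using $D^+_G\underline{x}(\pi;\varpi)=D^+_G\underline{z}(\pi;\varpi)$ and the invariance of the second partial differentials. No gap.
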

The following corollary only uses the Fr\'echet differentiability and is more easily accessible to users for concrete problems issued from modellings.
\begin{corollary}\label{cor33}
In the setting of Theorem \ref{th31}, under (A{\sc dua}), (A{\sc sol}1), (A{\sc sol}2-ter), (A{\sc con}2), we assume that the following condition is fulfilled.
\vskip1mm
\noindent $[ \pi \mapsto \underline{x}(\pi)]$ is continuous on $P$ and $\forall \psi \in \{f \} \cup \{ g_i : 1 \leq i \leq k \} \cup \{ h_j : 1 \leq j \leq {\ell} \}$, $\psi$ is Fr\'echet $C^1$ on $\G \times P$.
\vskip1mm
\noindent
Then (A{\sc fon}1), (A{\sc fon}2), (A{\sc con}1) are fulfilled, and all the conclusions of Theorem \ref{th31} hold, replacing the Hadamard differentials by Fr\'echet differentials in the formulas.
\end{corollary}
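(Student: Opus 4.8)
The plan is to show that the hypothesis of Corollary~\ref{cor33} is a special case of the hypotheses of Theorem~\ref{th31} together with its additional assumptions for parts (ii) and (iii), so that the conclusions transfer directly. The essential mechanism is that Fr\'echet $C^1$ regularity is a strong property that implies all the weaker Hadamard-type conditions, combined with the elementary fact that the composition of the continuous selection $[\pi\mapsto\underline{x}(\pi)]$ with the continuous partial differentials yields the required continuity along the solution path.

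First I would recall from the preliminaries (citing \cite{Fl}) the implications between the notions of differentiability: if $\psi$ is Fr\'echet differentiable at a point, then it is Hadamard differentiable there and the two differentials coincide, so $D_H\psi=D_F\psi$; moreover the partial Fr\'echet differentials exist and agree with the partial Hadamard differentials, $D_{H,1}\psi=D_{F,1}\psi$ and $D_{H,2}\psi=D_{F,2}\psi$. Since by assumption each $\psi\in\{f\}\cup\{g_i\}\cup\{h_j\}$ is of class Fr\'echet $C^1$ on $\G\times P$, the full differential $D_F\psi(\underline{x}(\pi_0),\pi_0)$ exists, hence $D_H\psi(\underline{x}(\pi_0),\pi_0)$ exists; and for every $\pi\in P$ the partial differentials $D_{F,1}\psi(\underline{x}(\pi),\pi)$ and $D_{F,2}\psi(\underline{x}(\pi),\pi)$ exist and equal their Hadamard counterparts. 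This already yields the existence statements in (A{\sc fon}1) and (A{\sc fon}2).

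Next I would establish the continuity clauses. Because $\psi$ is Fr\'echet $C^1$ on $\G\times P$, the maps $[(x,\pi)\mapsto D_{F,1}\psi(x,\pi)]$ and $[(x,\pi)\mapsto D_{F,2}\psi(x,\pi)]$ are continuous on $\G\times P$ (the partial differentials of a $C^1$ map being continuous). Composing with the continuous map $[\pi\mapsto(\underline{x}(\pi),\pi)]$---continuous precisely because $[\pi\mapsto\underline{x}(\pi)]$ is assumed continuous on $P$---shows that $[\pi\mapsto D_{H,1}\psi(\underline{x}(\pi),\pi)]$ and $[\pi\mapsto D_{H,2}\psi(\underline{x}(\pi),\pi)]$ are continuous on $P$. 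This delivers the continuity parts of (A{\sc fon}1) and (A{\sc fon}2). Finally, (A{\sc con}1) follows because the Fr\'echet $C^1$ regularity of each $h_j$ in particular makes $[x\mapsto h_j(x,\pi)]$ continuous on a neighborhood of $\underline{x}(\pi)$ for every $\pi\in P$. Having thus verified (A{\sc fon}1), (A{\sc fon}2), and (A{\sc con}1), and noting that (A{\sc dua}), (A{\sc sol}1), (A{\sc sol}2-ter), (A{\sc con}2) are assumed, I would invoke Theorem~\ref{th31}; since (A{\sc sol}2-ter) implies both (A{\sc sol}2) and (A{\sc sol}2-bis), all three parts (i)--(iii) apply, and replacing every $D_{H,2}$ by $D_{F,2}$ in the formulas is legitimate by the equality $D_{H,2}=D_{F,2}$ noted above.

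The only point requiring any care---and the one I would flag as the main obstacle---is the passage from ``$\psi$ is Fr\'echet $C^1$'' to the \emph{continuity of the partial differentials along the solution path}, since one must be sure that continuity of the total differential $D_F\psi$ on $\G\times P$ genuinely transmits to continuity of each partial differential and then survives composition with the merely continuous selection $\underline{x}(\cdot)$; in infinite dimension one should verify that the projection-type maps extracting $D_{F,1}$ and $D_{F,2}$ from $D_F$ are continuous linear operators, which they are, so no subtlety beyond standard normed-space bookkeeping arises. All remaining steps are the routine implications between differentiability notions and the straightforward transfer of the conclusions.
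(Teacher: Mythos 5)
Your proposal is correct: the paper itself gives no proof of Corollary \ref{cor33} (it is stated as an immediate consequence), and your argument supplies exactly the intended routine chain — Fr\'echet $C^1$ regularity yields existence and equality of the Hadamard (partial) differentials, continuity of $[(x,\pi)\mapsto D_{F,i}\psi(x,\pi)]$ composed with the continuous selection $[\pi\mapsto(\underline{x}(\pi),\pi)]$ yields the continuity clauses of (A{\sc fon}1)--(A{\sc fon}2) and (A{\sc con}1), and (A{\sc sol}2-ter) triggers all three parts of Theorem \ref{th31}. Your flagged point about extracting the partial differentials via the continuous linear restriction operators is the right (and only) place where care is needed, and it is handled correctly.
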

\begin{remark}\label{rem34} When $\X$ is finite dimensional, the conditions (A{\sc dua}) and ($A^0${\sc dua}) are automatically fulfilled. Moreover the Hadamard differentiability is equivalent to the Fr\'echet differentiability. Hence, without (A{\sc dua}), the statement of Theorem \ref{th31} holds and we can replace the Hadamard differentials by Fr\'echet differentials in the formulas of the conclusions.
\end{remark}
%

%%%%%%%%%%%%%%%%%%%%%%%%%%%%%%%%%%%%%%%%%
\section{Proofs of results of section 3}
%%%%%%%%%%%%%%%%%%%%%%%%%%%%%%%%%%%%%%%%%%%
\subsection{Results of topological algebra} $\E$ and $\W$ are two real normed spaces.
\begin{lemma}\label{lem41} Let $w_0 \in \W$ and $W$ be a neighborhood of $w_0$ in $\W$. Let $(b_i)_{1 \leq i \leq m} \in C^0(W, \E)^m$ s.t. $b_1(w_0)$, ..., $b_m(w_0)$ are linearly independent. Then there exists a neighborhood $W_0$ of $w_0$, $W_0 \subset W$, s.t. for all $w \in W_0$, $b_1(w)$, ..., $b_m(w)$ are linearly independent.
\end{lemma}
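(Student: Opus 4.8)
The plan is to reduce the statement about linear independence of the $m$ vectors $b_1(w),\dots,b_m(w)$ to the nonvanishing of a suitable continuous scalar function, and then invoke continuity to obtain the desired neighborhood. The key observation is that linear independence of a finite family in a normed space can be detected by a Gram-type determinant built from a continuous inner-type pairing, and that this determinant will vary continuously with $w$.

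First I would fix, by the Hahn-Banach theorem, continuous linear functionals $\phi_1,\dots,\phi_m \in \E^*$ such that the matrix $\left(\phi_p(b_q(w_0))\right)_{1\le p,q\le m}$ is invertible; this is possible precisely because $b_1(w_0),\dots,b_m(w_0)$ are linearly independent (one selects functionals that separate these vectors, for instance biorthogonal functionals obtained after extending a basis of their span). Then I would define the scalar function $\Delta : W \to \R$ by $\Delta(w) := \det\left(\phi_p(b_q(w))\right)_{1\le p,q\le m}$. Since each $b_q \in C^0(W,\E)$ and each $\phi_p \in \E^*$ is continuous, every entry $w \mapsto \phi_p(b_q(w))$ is continuous on $W$, and hence $\Delta$, being a polynomial in these entries, is continuous on $W$.

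Next I would use that $\Delta(w_0) \neq 0$ by the choice of the $\phi_p$. By continuity of $\Delta$ at $w_0$, there exists a neighborhood $W_0$ of $w_0$ with $W_0 \subset W$ such that $\Delta(w) \neq 0$ for all $w \in W_0$. Finally I would argue that $\Delta(w) \neq 0$ forces $b_1(w),\dots,b_m(w)$ to be linearly independent: if $\sum_{q} c_q b_q(w) = 0$ with scalars $c_q$, then applying each $\phi_p$ gives $\sum_q \phi_p(b_q(w)) c_q = 0$, a homogeneous linear system whose coefficient matrix has nonzero determinant, so all $c_q = 0$. This completes the proof.

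I do not expect a serious obstacle here; the only point requiring a little care is the very first step, namely the existence of the separating functionals $\phi_1,\dots,\phi_m$ in the possibly infinite-dimensional space $\E$. This is handled cleanly by Hahn-Banach: one extends a basis of $\operatorname{span}(b_1(w_0),\dots,b_m(w_0))$ to coordinate functionals on that finite-dimensional subspace and extends each continuously to all of $\E$. Notably, this determinant argument avoids any appeal to the inner product hypothesis (A\textsc{dua}); the continuity of a fixed finite collection of functionals is all that is needed, which keeps the lemma purely topological-algebraic as its placement in the paper suggests.
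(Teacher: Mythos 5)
Your proof is correct, but it follows a genuinely different route from the paper's. The paper works with the linear map $\Phi_w:\R^m\to\E$, $\Phi_w(u):=\sum_i u^i b_i(w)$, and argues by contradiction: if the conclusion failed, one could pick $w_k\to w_0$ and unit vectors $v_k$ with $\Phi_{w_k}(v_k)=0$, extract a convergent subsequence by compactness of the unit sphere of $\R^m$, and pass to the limit using the continuity of the $b_i$ to contradict the independence of the $b_i(w_0)$. You instead linearize the problem through duality: Hahn--Banach supplies functionals $\phi_1,\dots,\phi_m\in\E^*$ biorthogonal to (a basis of the span of) the $b_q(w_0)$, and the Gram-type determinant $\Delta(w)=\det\bigl(\phi_p(b_q(w))\bigr)$ is a single continuous scalar function whose nonvanishing certifies independence. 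Your argument is direct rather than by contradiction and reduces everything to the continuity of one real-valued function, which is arguably cleaner and makes the openness of the independence locus transparent; the paper's argument is more elementary in the sense that it uses only sequential compactness in $\R^m$ and never invokes the dual space or the Hahn--Banach theorem. Both proofs correctly avoid any appeal to the inner-product hypothesis (A{\sc dua}), which is only needed later (in Lemmas \ref{lem42} and \ref{lem43}).
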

\begin{proof} For all $w \in W$, we consider the mapping $\Phi_w : \R^m \rightarrow \E$ defined by\\
 $\Phi_w(u^1,...,u^m) := \sum_{1 \leq i \leq m} u^i b_i(w)$ when $(u^1,...,u^m) \in \R^m$. Clearly we have $\Phi_w \in \mathcal{L}(\R^m, \E)$, and the linear independence of $b_1(w)$, ..., $b_m(w)$ is equivalent to have $Ker \Phi_w = \{ 0 \}$. This last equality is equivalent to have $\Phi_w^{-1}(\{ 0 \}) \cap S(0,1) = \{ 0 \}$, where $S(0,1)$ is the unit sphere of $\R^m$. Proceeding by contradiction, we assume: $\forall k \in \N_*$, $\exists w_k \in B(w_0, \frac{1}{k})$, $\exists v_k \in S(0,1)$ s.t. $\Phi_{w_k}(v_k) = 0$. Using the compactness of S(0,1) and the Weierstras-Bolzano theorem, we obtain the existence of an increasing function $\sigma : \N_* \rightarrow \N_*$ and of $u \in S(0,1)$ s.t. $\lim_{k \rightarrow + \infty} v_{\sigma(k)} = u$. Hence, we have, for all $k \in \N_*$, $\sum_{1 \leq i \leq m} v^i_{\sigma(k)} b_i(w_{\sigma(k)}) = 0$. Since the $b_i$ are continuous, we obtain $\sum_{1 \leq i \leq m}u^i b_i(w_0)= 0$, and the linear independence of the $b_i(w_0)$ implies that $u = 0$ which contredicts $u \in S(0,1)$.
\end{proof}
In the two following lemmas, we assume the existence of functional $( \cdot \mid \cdot)_{\E} \in C^0( \E \times  \E ,\R)$ which is an inner product on $\E$. The following lemma is a version of the Gram-Schmidt's process (cf. e.g. \cite{La1} p. 366) in presence of a parameter.
\begin{lemma}\label{lem42}
Let $W$ be an open subset of $\W$, and $(e_i)_{1 \leq i \leq n} \in C^0(W, \E)^n$ s.t. $e_1(w)$, ..., $e_n(w)$ are linearly independent for all $w \in W$.\\
Then there exists $(\epsilon_i)_{1 \leq i \leq n} \in C^0(W,\E \setminus \{0 \})^n$ s.t. the following properties hold.
\begin{itemize}
\item[{\bf (a)}] $\forall w \in W$, $\forall i \in \{1,...n \}$, $\epsilon_i(w) \in span \{e_1(w),...,e_i(w) \}$.
\item[{\bf (b)}] $\forall w \in W$, $\forall (i,j) \in \{1,...n \}^2$ s.t. $i \neq j$, $(\epsilon_i(w) \mid \epsilon_j(w) )_{\E} = 0$.
\item[{\bf (c)}] $\forall (i,j) \in \{1,...n \}^2$, $\exists \alpha^i_j \in C^0(W,\R)$ s.t., $\forall w \in W$, \\
$\epsilon_j(w) = \sum_{1 \leq i \leq n} \alpha^i_j(w) e_i(w)$, where $\alpha^j_j(w) = 1$ and $\alpha^i_j(w) = 0$ when $i> j$.
\end{itemize}
\end{lemma}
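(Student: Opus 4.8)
The plan is to mimic the classical Gram–Schmidt process verbatim, but carrying the parameter $w$ through every formula and checking at each inductive step that the resulting objects are continuous functions of $w$ on all of $W$. I would proceed by induction on the index $i$, constructing $\epsilon_1(w),\ldots,\epsilon_n(w)$ one at a time. For the base case I set $\epsilon_1(w) := e_1(w)$; since $e_1 \in C^0(W,\E)$ and $e_1(w) \neq 0$ for all $w$ (the $e_i(w)$ being linearly independent), we have $\epsilon_1 \in C^0(W, \E \setminus \{0\})$, and properties (a), (c) hold trivially with $\alpha^1_1(w) = 1$.

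For the inductive step, assuming $\epsilon_1,\ldots,\epsilon_{i-1}$ have been built as continuous, nowhere-vanishing, pairwise-orthogonal maps satisfying (a) and (c), I would define
\[
\epsilon_i(w) := e_i(w) - \sum_{1 \leq p \leq i-1} \frac{(e_i(w) \mid \epsilon_p(w))_{\E}}{(\epsilon_p(w) \mid \epsilon_p(w))_{\E}}\, \epsilon_p(w).
\]
The orthogonality (b) against each $\epsilon_q(w)$, $q < i$, follows by the usual computation using the inductive orthogonality of the earlier vectors. That $\epsilon_i(w) \neq 0$ follows from (a): if it vanished, then $e_i(w)$ would lie in $\mathrm{span}\{e_1(w),\ldots,e_{i-1}(w)\}$, contradicting linear independence of the $e_j(w)$. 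Property (a) for index $i$ is immediate since each $\epsilon_p(w)$ lies in $\mathrm{span}\{e_1(w),\ldots,e_p(w)\} \subset \mathrm{span}\{e_1(w),\ldots,e_{i-1}(w)\}$.

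The point requiring care — and what I expect to be the main (though still routine) obstacle — is the continuity of $\epsilon_i$ and of the coefficients $\alpha^p_i$. The scalar functions $w \mapsto (e_i(w) \mid \epsilon_p(w))_{\E}$ and $w \mapsto (\epsilon_p(w) \mid \epsilon_p(w))_{\E}$ are continuous because $(\cdot \mid \cdot)_{\E} \in C^0(\E \times \E, \R)$ by hypothesis and the arguments are continuous in $w$. The crucial observation is that the denominator $(\epsilon_p(w) \mid \epsilon_p(w))_{\E}$ never vanishes: this is exactly the positive-definiteness of the inner product combined with $\epsilon_p(w) \neq 0$, which the induction hypothesis guarantees on all of $W$. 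Hence each quotient is a continuous real-valued function, and $\epsilon_i$, being a finite linear combination of continuous $\E$-valued maps with continuous scalar coefficients, lies in $C^0(W, \E)$.

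Finally, to obtain (c) for index $i$, I substitute the inductive expressions $\epsilon_p(w) = \sum_q \alpha^q_p(w) e_q(w)$ into the displayed formula and collect terms: the coefficient of $e_i(w)$ is $1$ (so $\alpha^i_i \equiv 1$), the coefficients $\alpha^q_i$ for $q < i$ are continuous combinations of the continuous quotients above and the earlier $\alpha^q_p$, and $\alpha^q_i(w) = 0$ for $q > i$ since only $e_1(w),\ldots,e_i(w)$ appear. This closes the induction and yields all three properties on $W$.
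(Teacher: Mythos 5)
Your argument is correct and is essentially identical to the paper's proof: the same induction, the same parameterized Gram--Schmidt formula $\epsilon_{k+1}(w) = e_{k+1}(w) - \sum_{p\le k}\frac{(e_{k+1}(w)\mid\epsilon_p(w))_{\E}}{(\epsilon_p(w)\mid\epsilon_p(w))_{\E}}\epsilon_p(w)$, the same contradiction argument for $\epsilon_{k+1}(w)\neq 0$, and the same observation that the coefficients are continuous because the denominators never vanish and the inner product is jointly continuous. No gaps.
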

\begin{proof} We proceed by induction on $n$. When $n = 1$, we set $\epsilon_1 := e_1$, and we have $\alpha_1^1(w) = 1$ for all $ w \in W$, and the conclusions (a, b, c) are fulfilled. We do the assumption of induction on $k$, and we prove the result for $k+1$. We set
\begin{equation}\label{eq41}
\epsilon_{k + 1}(w) := e_{k+1}(w) - \sum_{1 \leq i \leq k} \frac{( e_{k+1}(w) \mid \epsilon_i(w))_{\E}}{\Vert \epsilon_i(w) \Vert_e^2} \epsilon_i(w)
\end{equation}
where $\Vert \cdot \Vert_e$ is the norm associated to $( \cdot \mid \cdot)_{\E}$. \\
Hence $\epsilon_{k+1}(w) \in span \{ e_{k+1}(w), \epsilon_1(w), ..., \epsilon_k(w) \} = span \{ e_j(w) : 1 \leq j \leq k+1 \}$ by using the induction assumption, and so (a) is fulfilled. Let $i < k+1$. We have $(\epsilon_{k+1}(w) \mid \epsilon_i(w))_{\E} =  (e_{k+1}(w) \mid  \epsilon_i(w))_{\E}$\\
$ - \sum_{1 \leq j \leq k, j \neq i}  \frac{( e_{k+1}(w) \mid \epsilon_j(w))_{\E}}{\Vert \epsilon_j(w) \Vert_e^2} (\epsilon_j(w) \mid \epsilon_i(w))_{\E}
 - \frac{(e_{k+1}(w) \mid \epsilon_i(w))_{\E}}{\Vert \epsilon_i(w) \Vert_e^2} (\epsilon_i(w) \mid \epsilon_i(w))_{\E}$\\
$ =  (e_{k+1}(w) \mid  \epsilon_i(w))_{\E} - (e_{k+1}(w) \mid  \epsilon_i(w))_{\E} = 0$.\\
Hence, (b) is fulfilled. Notice that $\epsilon_{k+1}(w) \neq 0$ for all $w \in W$, since, proceeding by contradiction, if there exists $w \in W$ s.t. $\epsilon_{k+1}(w) = 0$, then, using (\ref{eq41}), we obtain $e_{k+1}(w) \in span \{ e_i(w) : 1 \leq i \leq k \}$ which contradicts the linear independence of the $e_i(w)$. When $j \in \{ 1,...,k \}$, we inductively define
\begin{equation}\label{eq42}
\alpha^j_{k+1}(w) := - \sum_{1 \leq i \leq k} \frac{(e_{k+1}(w) \mid \epsilon_i(w))_{\E}}{\Vert \epsilon_i(w) \Vert_e^2} \alpha^j_i(w)
, \hskip4mm
\alpha_{k+1}^{k+1}(w) := 1
\end{equation}
and, when $j > k+1$, we set
\begin{equation}\label{eq43}
\alpha^j_{k+1}(w) := 0.
\end{equation}
Since $( \cdot \mid \cdot)_{\E}$, $\Vert \cdot \Vert_e^2$, $e_1$, ...,$e_k$, $\epsilon_1$, ..., $\epsilon_k$ are continuous, we see that $\alpha^j_{k+1}$ is continuous as a composition of continuous mappings, for all $j$. Doing a straightforward calculation, we obtain
\begin{equation}\label{eq44}
\epsilon_{k+1}(w) = \sum_{1 \leq j \leq k+1} \alpha^j_{k+1}(w) e_j(w) = \sum_{1 \leq j \leq n} \alpha^j_{k+1}(w) e_j(w).
\end{equation}
Hence the reasoning by induction is complete, and the proof of the lemma is complete.
\end{proof}
\begin{lemma}\label{lem43} Let $W$ be an open subset of $\W$ and $(e_i)_{1 \leq i \leq n} \in C^0(W, \E)^n$ s.t. $e_1(w)$, ..., $e_n(w)$ are linearly independent, for all $w \in W$. We set $F_w := span \{ e_i(w) : 1 \leq i \leq n \}$ for all $w \in W$, and $F := \bigcup\limits_{w \in W} (F_w \times \{ w \})$. When $(x,w) \in F$, $\mathfrak{x}^i(x,w)$ denotes the i-th coordinate of $x$ in the basis $(e_j(w))_{1 \leq j \leq n}$.\\
Then, for all $i \in \{1,...,n \}$, $ \mathfrak{x}i \in C^0(F, \R)$.
\end{lemma}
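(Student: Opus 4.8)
The plan is to reduce the statement to the parametrized Gram--Schmidt process of Lemma \ref{lem42}, which is available precisely because we are assuming the existence of the continuous inner product $(\cdot \mid \cdot)_{\E}$. First I would apply Lemma \ref{lem42} to the family $(e_i)_{1 \leq i \leq n}$ to obtain $(\epsilon_i)_{1 \leq i \leq n} \in C^0(W, \E \setminus \{0\})^n$ satisfying (a), (b), (c). For each $w \in W$ the vectors $\epsilon_1(w), \dots, \epsilon_n(w)$ are nonzero and pairwise orthogonal by (b), hence linearly independent; since they lie in $F_w$ by (a) and $\dim F_w = n$, they form an orthogonal basis of $F_w$. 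Throughout, $F$ carries the topology induced by $\E \times \W$, so the two projections $(x,w) \mapsto x$ and $(x,w) \mapsto w$ are continuous on $F$.

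The key point is that coordinates in an orthogonal basis are computed through inner products and are therefore transparently continuous on $F$. For $(x,w) \in F$ I set $\beta^j(x,w) := (x \mid \epsilon_j(w))_{\E} / \Vert \epsilon_j(w) \Vert_e^2$, so that $x = \sum_{1 \leq j \leq n} \beta^j(x,w) \epsilon_j(w)$: this is the standard orthogonal expansion, obtained by taking the inner product of the identity with each $\epsilon_k(w)$ and invoking (b). Each $\beta^j$ is continuous on $F$, since the numerator $(x,w) \mapsto (x \mid \epsilon_j(w))_{\E}$ is the composition of the continuous inner product with the continuous map $(x,w) \mapsto (x, \epsilon_j(w))$, while the denominator $w \mapsto \Vert \epsilon_j(w) \Vert_e^2 = (\epsilon_j(w) \mid \epsilon_j(w))_{\E}$ is continuous and never vanishes because $\epsilon_j(w) \neq 0$.

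It then remains to transport the orthogonal coordinates $\beta^j$ to the wanted coordinates $\mathfrak{x}^i$ relative to $(e_i(w))$. Substituting $\epsilon_j(w) = \sum_{1 \leq i \leq n} \alpha^i_j(w) e_i(w)$ from (c) into $x = \sum_j \beta^j(x,w) \epsilon_j(w)$ and regrouping, the uniqueness of the coordinates in the basis $(e_i(w))$ gives $\mathfrak{x}^i(x,w) = \sum_{1 \leq j \leq n} \alpha^i_j(w) \beta^j(x,w)$. Each $\alpha^i_j$ is continuous on $W$ by (c), hence continuous on $F$ after composition with $(x,w) \mapsto w$, and each $\beta^j$ is continuous on $F$; therefore $\mathfrak{x}^i$ is a finite sum of products of continuous functions, hence continuous on $F$. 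There is no genuine obstacle once Lemma \ref{lem42} is in hand: the single idea requiring care is to express the coordinates first in the orthogonalized basis, where they appear as inner products and are manifestly continuous, and only afterward to return to the original basis through the continuous change-of-basis coefficients $\alpha^i_j$.
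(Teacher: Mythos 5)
Your proof is correct and follows essentially the same route as the paper: apply Lemma \ref{lem42}, compute the coordinates in the orthogonalized basis as continuous inner-product quotients, then pass back to the basis $(e_i(w))$ via the continuous coefficients $\alpha^i_j$. The only (harmless) difference is in the last step, where your identity $\mathfrak{x}^i(x,w) = \sum_{j} \alpha^i_j(w)\,\beta^j(x,w)$ reads off the change of basis directly and thereby avoids the paper's detour through the transition matrix $M(w)$ and the continuity of inversion on $GL(n,\R)$ --- a slight simplification of the same argument.
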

\begin{proof} We consider $(\epsilon_j)_{1 \leq j \leq n} \in C^0(W, \E)^n$ as defined in Lemma \ref{lem42}. When $w \in W$ and $x \in F_w$, we can write $x = \sum_{1 \leq i \leq n} y^i(x,w) \epsilon_i(w)$ where $y^i(x,w) := \frac{(x \mid \epsilon_i(w))_{\E}}{\Vert \epsilon_i(w) \Vert^2_e}$. Using Lemma \ref{lem42}, we know that $\epsilon_i \in C^0(W, \E \setminus \{0 \})$, and using the continuity of $( \cdot \mid \cdot )_{\E}$ with respect to the intial norm of $\E \times \E$, we obtain that $y^i \in C^0(F, \R)$ as a composition of continuous mappings.\\
Since $(e_i(w))_{1 \leq i \leq n}$ and $(\epsilon_i(w))_{1 \leq i \leq n}$ are two bases of the same vector space $F_w$, we can consider $M(w)$ the matrix of transition from $(e_i(w))_{1 \leq i \leq n}$ to $(\epsilon_i(w))_{1 \leq i \leq n}$. Using Lemma \ref{lem42}, the elements of the $i$-th column of $M(w)$ are: $\alpha^i_1(w)$, ..., $\alpha^i_{i-1}$, 1, 0, ...,0, and since the elements are continuous functions of $w$, we can assert that $[w \mapsto M(w)] \in C^0(W, GL(n,\R))$ where $GL(n, \R)$ is the classical group on the $n \times n$ invertible real matrices. The operator $\mathcal{I} : GL(n, \R) \rightarrow GL(n,\R)$, defined by $\mathcal{I}(L) := L^{-1}$, is well know to be continuous. Moreover $M(w)^{-1}$ is the matrix of transition from $(\epsilon_i(w))_{1 \leq i \leq n}$ to $(e_i(w))_{1 \leq i \leq n}$. Since we have $[\mathfrak{x}^1(x,w) ... \mathfrak{x}^n(x,w)]^t = M(w)^{-1} [y^1(x,w) ... y^n(x,w)]^t$, where the upper index $t$ denotes the transposition of the matrices, we obtain the continuity of the $\mathfrak{x}^i$ on $F$ as a composition of continuous mappings.
\end{proof}
%%%%%%%%%%%%%%%%%%%%%%%%%%%%%%%%%%%%%%%%%%%
\subsection{Proof of Theorem \ref{th31}} {\sf Conclusion (i)}.
We use Lemma \ref{lem41}, with $\W = \Y$, $\E = \X^*$ and (A{\sc con}2) to ensure the existence of an open neighborhood $Q$ of $\pi_0$ in $P$ such that, 
\begin{equation}\label{eq45}
\left.
\begin{array}{l}
\forall \pi \in Q, D_{H,1} g_1(\underline{x}(\pi), \pi), ..., D_{H,1} g_k (\underline{x}(\pi), \pi),
D_{H,1} h_1(\underline{x}(\pi), \pi),\\
 ..., D_{H,1} h_{\ell}(\underline{x}(\pi), \pi) \; 
{\rm are} \; {\rm linearly} \; {\rm independent}.
\end{array}
\right\}
\end{equation}
 We want to use the multipliers rule of \cite{Yi} (Theorem 2.2). Notice that our assumption (A{\sc fon}1) implies that assumptions (i) and (ii) of this multipliers rule are fulfilled. Our assumption (A{\sc fon}1) and Theorem 4.2.6 in \cite{Fl} imply that the third assumption of this multipliers rule is fulfilled. (A{\sc con}1) and (A{\sc fon}1) imply that the last assumption of this multipliers rule is fulfilled. Hence using this multipliers rule and (\ref{eq45}), we can ensure the existence of multipliers $(\lambda_i(\pi))_{1 \leq i \leq k} \in \R^k$ and $(\mu_j(\pi))_{1 \leq j \leq {\ell}} \in \R^{\ell}$ which satisfy the following conditions for all $\pi \in Q$.
\begin{equation}\label{eq46}
\forall \pi \in Q, \forall i \in \{1,...,k \}, \lambda_i(\pi) \geq 0.
\end{equation}
\begin{equation}\label{eq47}
\forall \pi \in Q, \forall i \in \{1,...,k \}, \lambda_i(\pi) g_i(\underline{x}(\pi), \pi) = 0.
\end{equation}
\begin{equation}\label{eq48}
\left.
\begin{array}{l}
\forall \pi \in Q, \;\; D_{H,1}f(\underline{x}(\pi), \pi) +
  \sum_{1 \leq i \leq k}  \lambda_i(\pi) D_{H,1}g_i(\underline{x}(\pi), \pi)\\
	+ \sum_{1 \leq j \leq {\ell}} \mu_j(\pi) D_{H,1} h_j(\underline{x}(\pi), \pi) = 0.
\end{array}
\right\}
\end{equation}
The following result establishes the continuity of the Karush-Kuhn-Tucker multipliers with respect to the parameter $\pi$.
\begin{lemma}\label{lem44}
For all $i \in \{1,...,k \}$, $\lambda_i \in C^0(Q,\R_+)$ and, for all $j \in \{1,...,{\ell} \}$, $\mu_j \in C^0(Q, \R)$.
\end{lemma}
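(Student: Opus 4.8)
The plan is to show that the multipliers, which are uniquely determined at each $\pi \in Q$ by the linear system \eqref{eq48} together with the linear independence \eqref{eq45}, depend continuously on $\pi$. The essential mechanism is that \eqref{eq48} expresses $-D_{H,1}f(\underline{x}(\pi),\pi)$ as a linear combination, with coefficients $(\lambda_i(\pi),\mu_j(\pi))$, of the $k+\ell$ linearly independent vectors $D_{H,1}g_i(\underline{x}(\pi),\pi)$ and $D_{H,1}h_j(\underline{x}(\pi),\pi)$ in $\X^*$; the multipliers are thus precisely the coordinates of that vector in the basis of the finite-dimensional subspace these differentials span. The first thing I would do is fix notation: set $m := k + \ell$, relabel the $m$ differentials $D_{H,1}g_i(\underline{x}(\pi),\pi)$ and $D_{H,1}h_j(\underline{x}(\pi),\pi)$ as functions $e_1, \dots, e_m \in C^0(Q, \X^*)$, and observe that \eqref{eq45} says exactly that $e_1(\pi), \dots, e_m(\pi)$ are linearly independent for every $\pi \in Q$.

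The continuity of the maps $\pi \mapsto e_i(\pi)$ is not an extra assumption but is supplied by (A{\sc fon}1), which grants that $[\pi \mapsto D_{H,1}\psi(\underline{x}(\pi),\pi)]$ is continuous on $P \supset Q$ for every $\psi$ among the constraint functions; likewise (A{\sc fon}1) gives the continuity of $[\pi \mapsto D_{H,1}f(\underline{x}(\pi),\pi)]$. Next I would invoke Lemma \ref{lem43} with $\W = \Y$, $\E = \X^*$, $W = Q$, and the family $(e_i)_{1 \leq i \leq m}$ just defined, using (A{\sc dua}) to furnish the required inner product $(\cdot \mid \cdot)_{\X^*} \in C^0(\X^* \times \X^*, \R)$ on $\E = \X^*$. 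Lemma \ref{lem43} then asserts that each coordinate functional $\mathfrak{x}^i$ is continuous on $F = \bigcup_{\pi \in Q}(F_\pi \times \{\pi\})$, where $F_\pi = \operatorname{span}\{e_1(\pi), \dots, e_m(\pi)\}$.

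To conclude, I would consider the map $x(\pi) := -D_{H,1}f(\underline{x}(\pi),\pi)$. By \eqref{eq48}, $x(\pi) \in F_\pi$ for each $\pi \in Q$, so $(x(\pi),\pi) \in F$, and the map $\pi \mapsto (x(\pi),\pi)$ is continuous from $Q$ into $F$ by the continuity of $x(\cdot)$ furnished by (A{\sc fon}1). The multipliers are then recovered as $\lambda_i(\pi) = \mathfrak{x}^i(x(\pi),\pi)$ for the $g$-indices and $\mu_j(\pi) = \mathfrak{x}^{k+j}(x(\pi),\pi)$ for the $h$-indices, since \eqref{eq48} identifies them as the coordinates of $x(\pi)$ in the basis $(e_i(\pi))_{1 \leq i \leq m}$ of $F_\pi$. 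As compositions of the continuous coordinate maps $\mathfrak{x}^i$ with the continuous map $\pi \mapsto (x(\pi),\pi)$, each $\lambda_i$ and each $\mu_j$ is continuous on $Q$; nonnegativity of the $\lambda_i$ is already recorded in \eqref{eq46}, so $\lambda_i \in C^0(Q,\R_+)$ and $\mu_j \in C^0(Q,\R)$.

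I expect the only genuine subtlety to be verifying that the hypotheses of Lemma \ref{lem43} are met cleanly — in particular confirming that the inner product on $\X^*$ required by that lemma is exactly what (A{\sc dua}) provides, and that the continuity of the $e_i$ on all of $Q$ follows from (A{\sc fon}1) rather than needing to be assumed separately. Once the problem is reframed as reading off coordinates in a continuously varying basis, the heavy lifting has already been done in Lemmas \ref{lem42} and \ref{lem43}, and the remaining argument is a direct composition of continuous maps.
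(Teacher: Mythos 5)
Your proposal is correct and follows essentially the same route as the paper: both relabel the constraint differentials as a continuous family $e_1,\dots,e_{k+\ell}$, use (A{\sc dua}) and \eqref{eq45} to invoke Lemma \ref{lem43} with $\W=\Y$, $\E=\X^*$, and read off the multipliers as the coordinates of $-D_{H,1}f(\underline{x}(\pi),\pi)$ in the basis $(e_i(\pi))_i$ via \eqref{eq48}. Your write-up is in fact more explicit than the paper's (which compresses all of this into two sentences), and the added details — the continuity of $\pi\mapsto(x(\pi),\pi)$ into $F$ and the appeal to \eqref{eq46} for nonnegativity — are accurate.
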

\begin{proof} Under (A{\sc fon}1), (A{\sc dua}), with $\W = \Y$ and $\E = \X^*$, we can use Lemma \ref{lem43}: we set $e_1(\pi) := D_{H,1} g_1(\underline{x}(\pi), \pi)$, ..., $e_k(\pi) := D_{H,1} g_k(\underline{x}(\pi), \pi)$, $e_{k+1}(\pi) := D_{H,1} h_1(\underline{x}(\pi), \pi)$, ..., $e_{k + {\ell}}(\pi) := D_{H,1} h_{\ell}(\underline{x}(\pi), \pi)$, and using (\ref{eq45}) and (\ref{eq48}), the assumptions of Lemma \ref{lem43} are fulfilled, and we can conclude that the coordinates $\lambda_i$ and $\mu_j$ are continuous.
\end{proof}
\noindent
Since we have $V(\pi)= f(\underline{x}(\pi), \pi)$ for all $\pi \in Q$, and since the Hadamard differential satisfies the Chain Rule (\cite{Fl}, (4.2.5) p.263), we can calculate
\begin{equation}\label{eq49}
D^+_G V(\pi_0; \varpi)  =  D_{H,1} f(\underline{x}(\pi_0), \pi_0) \cdot D^+_G \underline{x}(\pi_0; \varpi) + D_{H,2} f(\underline{x}(\pi_0), \pi_0) \cdot \varpi.
\end{equation}
Under (\ref{eq48}), we also have 
\begin{equation}\label{eq410}
\left.
\begin{array}{l}
D_{H,1} f(\underline{x}(\pi_0), \pi_0) \cdot D^+_G \underline{x}(\pi_0; \varpi) = \\
- \sum_{1 \leq i \leq k} \lambda_i(\pi_0) D_{H,1} g_i (\underline{x}(\pi_0), \pi_0) \cdot D^+_G \underline{x}(\pi_0; \varpi)\\
-\sum_{1 \leq j \leq {\ell}} \mu_j(\pi_0) D_{H,1}h_j (\underline{x}(\pi_0), \pi_0) \cdot D^+_G \underline{x}(\pi_0; \varpi).
\end{array}
\right\}
\end{equation}
If $\lambda_i(\pi_0) > 0$, using Lemma \ref{lem44}, we can assert that there exists a neighborhood $N$ of $\pi_0$ in $Q$ such that $\lambda_i(\pi) > 0$ for all $\pi \in N$, and using (\ref{eq47}) we obtain $g_i(\underline{x}(\pi), \pi) = 0$ for all $\pi \in N$. Differentiating with respect to $\pi$, we obtain 
$$D_{H,1} g_i (\underline{x}(\pi_0), \pi_0) \cdot D^+_G \underline{x}(\pi_0;\varpi) + D_{H,2} g_i (\underline{x}(\pi_0), \pi_0) \cdot \varpi = 0.$$
To subsume this reasoning, we write
\begin{equation}\label{eq411}
\left.
\begin{array}{l}
\lambda_i(\pi_0) > 0 \Longrightarrow \lambda_i(\pi_0) D_{H,1}g_i(\underline{x}(\pi_0), \pi_0) \cdot D^+_G \underline{x}(\pi_0;\varpi)\\
= - \lambda_i(\pi_0) D_{H,2}g_i(\underline{x}(\pi_0), \pi_0) \cdot \varpi.
\end{array}
\right\}
\end{equation}
Moreover the following assertion is clear.
\begin{equation}\label{eq412}
\left.
\begin{array}{l}
\lambda_i(\pi_0) = 0 \Longrightarrow \lambda_i(\pi_0) D_{H,1}g_i(\underline{x}(\pi_0), \pi_0) \cdot D^+_G \underline{x}(\pi_0;\varpi)\\
= - \lambda_i(\pi_0) D_{H,2}g_i(\underline{x}(\pi_0), \pi_0) \cdot \varpi.
\end{array}
\right\}
\end{equation}
From (\ref{eq411}) and (\ref{eq412}), we obtain
\begin{equation}\label{eq413}
\left.
\begin{array}{l}
\forall i \in \{ 1,...,k \}, \; \lambda_i(\pi_0) D_{H,1}g_i(\underline{x}(\pi_0), \pi_0) \cdot D^+_G \underline{x}(\pi_0;\varpi)\\
= - \lambda_i(\pi_0) D_{H,2}g_i(\underline{x}(\pi_0), \pi_0) \cdot \varpi.
\end{array}
\right\}
\end{equation}
Since $h_j(\underline{x}(\pi), \pi) = 0$ when $\pi \in Q$, differentiating at $\pi_0$, we obtain the following assertion.
\begin{equation}\label{eq414}
\left.
\begin{array}{l}
\forall j \in \{ 1,...,{\ell} \}, \mu_j(\pi_0) D_{H,1} h_j(\underline{x}(\pi_0), \pi_0) \cdot D^+_G \underline{x}(\pi_0; \varpi)\\
= - \mu_j(\pi_0) D_{H,2} h_j(\underline{x}(\pi_0), \pi_0) \cdot \varpi.
\end{array}
\right\}
\end{equation}
Using (\ref{eq413}) and (\ref{eq414}) into (\ref{eq410}), we have
\begin{equation}\label{eq415}
\left.
\begin{array}{l}
D_{H,1}f(\underline{x}(\pi_0), \pi_0)  \cdot D^+_G \underline{x}(\pi_0;\varpi) = 
\sum_{1 \leq i \leq k} \lambda_i(\pi_0) D_{H,2}g_i(\underline{x}(\pi_0), \pi_0) \cdot \varpi  \\
+ \sum_{1 \leq j \leq {\ell}} \mu_j(\pi_0) D_{H,2} h_j (\underline{x}(\pi_0), \pi_0) \cdot \varpi.
\end{array}
\right\}
\end{equation}
Using (\ref{eq415}) into (\ref{eq49}), we obtain: $D^+_G V(\pi_0; \varpi)  =  D_{H,2}f(\underline{x}(\pi_0), \pi_0)  \cdot \varpi$ \\
$+ \sum_{1 \leq i \leq k}  \lambda_i(\pi_0) D_{H,2}g_i(\underline{x}(\pi_0), \pi_0) \cdot \varpi  
 + \sum_{1 \leq j \leq {\ell}} \mu_j(\pi_0) D_{H,2} h_j (\underline{x}(\pi_0), \pi_0) \cdot \varpi$,
and so the first conclusion of Theorem \ref{th31} is proven.\\
{\sf Conclusion (ii)}. Since (A{\sc sol}2-bis) implies (A{\sc sol}2), we can use the formula of $D^+_G V(\pi_0, \varpi)$ for all the increments $\varpi$; from this formula we see that \\
$[ \varpi \mapsto D^+_G V(\pi_0, \varpi)]$, from $\Y$ into $\R$, is a linear combination of elements of $\Y^*$. Consequently $V$ is G\^ateaux differentiable at $\pi_0$, and so the second conclusion of Theorem \ref{th31} is proven.\\
{\sf Conclusion (iii)}. From (A{\sc sol}2-ter), using the second conclusion at $\pi \in Q$ instead of $\pi_0$, we know that, for all $\pi \in Q$, we have $
D_GV(\pi) = D_{H,2} f(\underline{x}(\pi), \pi) + \sum_{1 \leq i \leq k} \lambda_i(\pi) D_{H,2} g_i(\underline{x}(\pi), \pi) +
\sum_{1 \leq j \leq {\ell}} \mu_j(\pi) D_{H,2} h_j(\underline{x}(\pi), \pi)$. \\
Using (A{\sc fon}2), we see that $D_GV \in C^0(Q, \Y^*)$, hence (\cite{Fl}, (4.4.7) Corollary 2, p.257), $V$ is Fr\'echet differentiable on $Q$ and $D_F V \in C^0(Q, \Y^*)$. The proof of Theorem \ref{th31} is complete.
%%%%%%%%%%%%%%%%%%%%%%%%%%%%%%%%%%%%%%%%%%%%%%%%%%
\subsection{Proof of Corollary \ref{cor32}}
The strategy to realize this proof is to translate $(\mathcal{M}^0, \pi)$ into a new problem $(\mathcal{M}^1, \pi)$ which is a special case of $(\mathcal{M}, \pi)$, to apply Theorem \ref{th31} on $(\mathcal{M}^1, \pi)$, and to translate the conclusions on $(\mathcal{M}^1, \pi)$ into conclusions on $(\mathcal{M}^0, \pi)$.\\
For all $\psi^0 \in \{ f^0 \} \cup \{ g_i^0 : 1 \leq i \leq k \} \cup \{ h_j^0 : 1 \leq j \leq {\ell} \}$ we introduce the function $\psi : (\G^0 - \underline{z}(\pi_0)) \times \Y \rightarrow \R$ by setting $\psi(x, \pi) := \psi^0(x + \underline{z}(\pi_0), \pi)$. Note that $\G^0 - \underline{z}(\pi_0) $ is an open subset of $\S$. We consider the new following problem.
\[
(\mathcal{M}^1, \pi)
\left\{
\begin{array}{cl}
{\rm Maximize} & f(x, \pi)\\
{\rm subject} \; {\rm to} & x \in \G^0 - \underline{z}(\pi_0)\\
\null & \forall i \in \{ 1,...,k \}, \; g_i(x, \pi) \geq 0 \\
\null & \forall j \in \{ 1,..., {\ell} \}, \; h_j(x,\pi) = 0.
\end{array}
\right.
\]
This problem is a special case of $(\mathcal{M}, \pi)$ with $\X = \S$. It is easy to verify that $x$ is admissible for $(\mathcal{M}^1, \pi)$ if and only if $x + \underline{z}(\pi_0)$ is admissible for $(\mathcal{M}^0, \pi)$, and $x$ is a solution of $(\mathcal{M}^1, \pi)$ if and only if $x + \underline{z}(\pi_0)$ is a solution of $(\mathcal{M}^0, \pi)$. We set $\underline{x}(\pi) := \underline{z}(\pi) - \underline{z}(\pi_0)$. After that, note that the assumptions on $(\mathcal{M}^0, \pi)$ were done to be translatable into assumptions of Theorem \ref{th31} on $(\mathcal{M}^1, \pi)$.
  Then the conclusions of Theorem \ref{th31} are valid on $(\mathcal{M}^1, \pi)$. Now it suffices to translate the conclusions on $(\mathcal{M}^1, \pi)$ into conclusions on $(\mathcal{M}^0, \pi)$.  To do this, it suffices to note that $D^+_G \underline{x}(\pi; \varpi) = D^+_G \underline{z}(\pi; \varpi)$ and, for all $\psi^0 \in \{ f^0 \} \cup \{ g_i^0 : 1 \leq i \leq k \} \cup \{ h_j^0 : 1 \leq j \leq {\ell} \}$, $D_{H,2} \psi^0(\underline{z}(\pi), \pi) = D_{H,2} \psi( \underline{x}(\pi), \pi)$. The proof of the corollary is complete.
%%%%%%%%%%%%%%%%%%%%%%%%%%%%%%%%%%%%%%%%%%%%%%
\section{Calculus of Variations}
This section is divided into the following subsections. In a first subsection, we state an envelope theorem for $(\mathcal{V}, \pi)$ after to specify the assumptions. In a second subsection, we establish new results on functionals under an integral form. In a third subsection, we treat of the Euler-Lagrange equation. In the last subsection, we give a proof of the envelope theorem.
\subsection{An envelope theorem.} $X = \R^n$ and $Y$ is a real normed space. $T \in \; ]0, + \infty[$ and  $M$ is an open subset of $\R^n$. We consider functions $L : [0,T] \times M \times \R^n \times Y \rightarrow \R$, $\mathfrak{g}_i : [0,T] \times M \times \R^n \times Y \rightarrow \R$ for all $i \in \{ 1,...,k\}$, $\mathfrak{h}_j : [0,T] \times M \times \R^n \times Y \rightarrow \R$ for all $j \in \{1,...,{\ell} \}$. We fix $a_0, a_T \in M$.\\
We fix $\pi_0 \in Y$ and we consider the following list of conditions: \\
\underline{\bf Conditions on the solutions}\\
{\bf (B{\sc sol}1)} There exists an open neighborhood $P$ of $\pi_0$ in $Y$ s.t., for all $\pi \in P$, there exists a solution $x(\pi)$ of $(\mathcal{V}, \pi)$, and $[ \pi \mapsto x(\pi)]$ is continuous at $\pi_0$.\\
{\bf (B{\sc sol}2)} There exists $\varpi \in Y$ s.t. $D^+_Gx(\pi_0; \varpi)$ exists.\\
{\bf (B{\sc sol}2-bis)} For all $\varpi \in Y$, $D^+_G x(\pi_0; \varpi)$ exists.\\
{\bf (B{\sc sol}2-ter)} For all $\pi \in P$, for all $\varpi \in Y$, $D^+_Gx(\pi; \varpi)$ exists.\\
%%%%%%%%%%%%%%%%%%%%%%%%%%%%%
\underline{\bf Conditions on the integrand of the criterion and of the constraints}\\
{\bf (B{\sc int}1)} There exist $\rho > 0$ and $\xi \in \mathcal{L}^1([0,T], \mathcal{B}([0,T]), \R_+, \mathfrak{m}_1)$ s.t., for all $\psi \in \{ L\} \cup \{ \mathfrak{g}_i : 1 \leq i \leq k \} \cup \{ \mathfrak{h}_j : 1 \leq j \leq {\ell} \}$, for all $t \in [0,T]$, for all $u, u_1 \in B(x(\pi_0)(t), \rho)$, for all $v, v_1 \in B(x(\pi_0)'(t), \rho)$, for all $\pi, \pi_1 \in B(\pi_0,\rho), \\
\vert \psi (t,u,v, \pi) - \psi(t, u_1,v_1, \pi_1) \vert \leq \xi(t)( \Vert u - u_1 \Vert + \Vert v - v_1 \Vert + \Vert \pi - \pi_1 \Vert)$. \\
{\bf (B{\sc int}2)}  For all $\psi \in \{ L \} \cup \{ \mathfrak{g}_i : 1 \leq i \leq k \} \cup \{ \mathfrak{h}_j : 1 \leq j \leq {\ell} \}$, $\psi$ is continuous on $[0,T] \times M \times \R^n \times P$, and for all $t \in [0,T]$, for all $\pi \in P$,
\\ $D_{H, (2,3,4)} \psi(t, x(\pi)(t), x(\pi)'(t), \pi)$ exists.\\
{\bf (B{\sc int}3)} For all $\psi \in \{ L \} \cup \{ \mathfrak{g}_i : 1 \leq i \leq k \} \cup \{ \mathfrak{h}_j : 1 \leq j \leq {\ell} \}$, for all $t \in [0,T]$, $[ \pi \mapsto D_{H, (2,3)} \psi(t, x(\pi)(t), x(\pi)'(t), \pi)]$ is continuous from $B(\pi_0, \rho)$ into $(\R^{n*}, \Vert \cdot \Vert_*)^2$.\\
{\bf (B{\sc int}3-bis)} For all $\psi \in \{ L \} \cup \{ \mathfrak{g}_i : 1  \leq i \leq k \} \cup \{ \mathfrak{h}_j : 1 \leq j \leq {\ell} \}$,  for all $t \in [0,T]$, $[ \pi \mapsto D_{H,(2,3,4)} \psi(t, x(\pi)(t), x(\pi)'(t), \pi)]$ is continuous on $B(\pi_0, \rho)$.\\
{\bf (B{\sc int}4)} For all $\psi \in  \{ L \} \cup \{ \mathfrak{g}_i : 1  \leq i \leq k \} \cup \{ \mathfrak{h}_j : 1 \leq j \leq {\ell} \}$, for all $\pi \in B(\pi_0, \rho)$, 
$[t \mapsto D_{H, (2,3,4)} \psi(t, x(\pi)(t), x(\pi)'(t), \pi)]$ is measurable from  $([0,T], \mathcal{B}([0,T]))$ into $((\R^n \times \R^n \times Y)^*, \mathcal{B}((\R^n \times \R^n \times Y)^*)$.\\
%%%%%%%%%%%%%%%%%%%%%%%%%%%%%%%%%%%%%%%%%%%%%%%%% 
\underline{\bf Conditions on the integrands of the constraints only}\\
{\bf (B{\sc con})} For all $\lambda = (\lambda_i)_{1 \leq i \leq k} \in \R^k$ and for all $\mu = (\mu_j)_{1 \leq j \leq {\ell}} \in \R^{\ell}$ s.t. $(\lambda, \mu) \neq (0,0)$, $x(\pi_0)$ is not a solution of the Euler equation in Dubois-Reymond form (\cite{IT} p.106) $D_{H,3} \psi_{\lambda, \mu}(t,x(t),x'(t), \pi_0) = \int_{[0,t]} D_{H,2} \psi_{\lambda, \mu}(t, x(t), x'(t), \pi_0) \;d \mathfrak{m}_1(t) + c_{\lambda, \mu}$ \; $\mathfrak{m}_1$-a.e. $t \in [0,T]$, where\\
 $\psi_{\lambda, \mu}(t,x,v, \pi) := \sum_{1 \leq i \leq k} \lambda_i \mathfrak{g}_i(t,x,v, \pi) + \sum_{1 \leq j \leq {\ell}} \mu_j \mathfrak{h}_j(t,x,v, \pi)$ and $c_{\lambda, \mu} \in \R^{n*}$ is a constant.\\
%%%%%%%%%%%%%%%%%%%%%%%%%%%%%%
\begin{theorem}\label{th51} {\bf (Envelope Theorem).} We assume that (B{\sc sol}1), (B{\sc sol}2), \\
(B{\sc int}1), (B{\sc int}2), (B{\sc int}3) and (B{\sc con}) are fulfilled. Then the following assertions hold.
\begin{itemize}
\item[{\bf (I)}] The value function of $(\mathcal{V}, \pi)$ admits a right directional derivative at $\pi_0$ in the direction $\varpi$ exists and we have
\[
\begin{array}{cl}
D^+_G V(\pi_0; \varpi)  = & \int_{[0,T]} D_{H,4} L (t, x(\pi_0)(t), x(\pi_0)'(t), \pi_0) \cdot \varpi \; d\mathfrak{m}_1(t) +\\
\null & \sum_{1 \leq i \leq k} \lambda_i(\pi_0) \int_{[0,T]} D_{H,4} \mathfrak{g}_i(t, x(\pi_0)(t), x(\pi_0)'(t), \pi_0) \cdot \varpi \; d\mathfrak{m}_1(t) +\\
\null & \sum_{1 \leq j \leq {\ell}} \mu_j(\pi_0) \int_{[0,T]} D_{H,4} \mathfrak{h}_j(t, x(\pi_0)(t), x(\pi_0)'(t), \pi_0) \cdot \varpi \; d\mathfrak{m}_1(t)
\end{array}
\]
where $(\lambda_i(\pi_0))_{1 \leq i \leq k}$ and $(\mu_j(\pi_0))_{1 \leq j \leq {\ell}}$ are the Karush-Kuhn-Tucker mutipliers associated to the solution $x(\pi_0)$ of the problem $( \mathcal{V}, \pi_0)$.
\item[{\bf (II)}] 
If in addition we replace (B{\sc sol}2) by (B{\sc sol}2-bis), the value function is G\^ateaux differentiable at $\pi_0$ and for all $\varpi \in Y$, we have 
\[
\begin{array}{cl}
D_G V(\pi_0) \cdot \varpi = & \int_{[0,T]} D_{H,4} L (t, x(\pi_0)(t), x(\pi_0)'(t), \pi_0) \cdot \varpi \; d\mathfrak{m}_1(t) +\\
\null & \sum_{1 \leq i \leq k} \lambda_i(\pi_0) \int_{[0,T]} D_{H,4} \mathfrak{g}_i(t, x(\pi_0)(t), x(\pi_0)'(t), \pi_0) \cdot \varpi \; d\mathfrak{m}_1(t) +\\
\null & \sum_{1 \leq j \leq {\ell}} \mu_j(\pi_0) \int_{[0,T]} D_{H,4} \mathfrak{h}_j(t, x(\pi_0)(t), x(\pi_0)'(t), \pi_0) \cdot \varpi \; d\mathfrak{m}_1(t)
\end{array}
\]
\item[{\bf (III)}] If in addition we assume that (B{\sc sol}2-ter), (B{\sc int}3-bis), and (B{\sc int}4) are fulfilled, then the value function is of class Fr\'echet $C^1$ on an open neighborhood of $\pi_0$, and for all $\pi$ which belongs to this neighborhood, for all $\varpi \in Y$, we have
\[
\begin{array}{cl}
D_F V(\pi) \cdot \varpi = & \int_{[0,T]} D_{H,4} L (t, x(\pi)(t), x(\pi)'(t), \pi) \cdot \varpi \; d\mathfrak{m}_1(t) +\\
\null & \sum_{1 \leq i \leq k} \lambda_i(\pi) \int_{[0,T]} D_{H,4} \mathfrak{g}_i(t, x(\pi)(t), x(\pi)'(t), \pi) \cdot \varpi \; d\mathfrak{m}_1(t) +\\
\null & \sum_{1 \leq j \leq {\ell}} \mu_j(\pi) \int_{[0,T]} D_{H,4} \mathfrak{h}_j(t, x(\pi)(t), x(\pi)'(t), \pi) \cdot \varpi \; d\mathfrak{m}_1(t)
\end{array}
\]
\end{itemize}
\end{theorem}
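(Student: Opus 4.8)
The plan is to recognize $(\mathcal{V}, \pi)$ as a particular instance of the affine static optimization problem $(\mathcal{M}^0, \pi)$ and to apply Corollary \ref{cor32}. First I would take the ambient normed space to be $C^1([0,T], \R^n)$ with its usual $C^1$-norm, the affine set to be $C^1_{a_0,a_T}([0,T], M)$ whose director subspace is $\S := C^1_{0,0}([0,T], \R^n)$, and set $f^0 := J$, $g_i^0 := G_i$, $h_j^0 := H_j$, with parameter space $Y$. The value function of $(\mathcal{V}, \pi)$ then coincides with $V^0$, so it suffices to verify the hypotheses ($A^0${\sc dua}), ($A^0${\sc sol}1), ($A^0${\sc sol}2), ($A^0${\sc fon}1), ($A^0${\sc con}1), ($A^0${\sc con}2), after which conclusions ($\alpha$), ($\beta$), ($\gamma$) translate directly into (I), (II), (III), and the strengthened hypotheses (B{\sc sol}2-bis), (B{\sc sol}2-ter), (B{\sc int}3-bis), (B{\sc int}4) feed the corresponding bis/ter variants.

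The conditions on the solutions are immediate, since ($A^0${\sc sol}1) is (B{\sc sol}1), ($A^0${\sc sol}2) is (B{\sc sol}2), and so on. Because $C^1([0,T], \R^n)$ is infinite dimensional, Remark \ref{rem34} does not apply and ($A^0${\sc dua}) must genuinely be produced: I would supply the continuous inner product on $\S^*$ from the study of the dual of $C^1_{0,0}([0,T], \R^n)$ carried out in the dual-space subsection. The bulk of the routine work is ($A^0${\sc fon}1), ($A^0${\sc fon}2) and ($A^0${\sc con}1): these amount to showing that each functional $x \mapsto \int_0^T \psi(t, x(t), x'(t), \pi)\,dt$ is Hadamard differentiable jointly in $(x,\pi)$, that its partial differentials are obtained by differentiation under the integral sign, and that they vary continuously with $\pi$ along the solution path. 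Here (B{\sc int}1) provides the integrable majorant needed to pass limits inside the integral, (B{\sc int}2) gives the pointwise existence of $D_{H,(2,3,4)}\psi$, the continuity of $\pi \mapsto x(\pi)$ from (B{\sc sol}1) keeps the arguments $(x(\pi)(t), x(\pi)'(t))$ inside the balls on which (B{\sc int}1) holds, and (B{\sc int}3)/(B{\sc int}3-bis) supply the continuity in $\pi$; these are exactly the new results on integral functionals of the second subsection, with (B{\sc int}4) ensuring integrability of the pointwise differentials in the Fréchet case.

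The decisive step is ($A^0${\sc con}2), the linear independence in $\S^*$ of $D_{H,1}G_1(x(\pi_0), \pi_0), \ldots, D_{H,1}G_k, D_{H,1}H_1, \ldots, D_{H,1}H_\ell$. A nontrivial dependence relation with coefficients $(\lambda,\mu)\neq(0,0)$ says precisely, writing $\psi_{\lambda,\mu} := \sum_i \lambda_i \mathfrak{g}_i + \sum_j \mu_j \mathfrak{h}_j$, that
\[
\int_0^T \bigl[ D_{H,2}\psi_{\lambda,\mu}(t, x(t), x'(t), \pi_0)\cdot \delta x(t) + D_{H,3}\psi_{\lambda,\mu}(t, x(t), x'(t), \pi_0)\cdot \delta x'(t) \bigr]\,dt = 0
\]
for every $\delta x \in \S$, i.e. with $\delta x(0) = \delta x(T) = 0$. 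By the Dubois-Reymond form of the fundamental lemma of the calculus of variations, which is the content of the Euler-equation subsection, this vanishing is equivalent to $x(\pi_0)$ solving $D_{H,3}\psi_{\lambda,\mu} = \int_{[0,t]} D_{H,2}\psi_{\lambda,\mu}\,d\mathfrak{m}_1 + c_{\lambda,\mu}$ $\mathfrak{m}_1$-a.e., which is exactly what (B{\sc con}) forbids; hence ($A^0${\sc con}2) holds. I expect this equivalence to be the main obstacle, both because it demands the Dubois-Reymond lemma in the Hadamard-differentiable, $\R^n$-valued, merely $\mathcal{L}^1$ setting, and because it rests on identifying precisely when a continuous linear form on $C^1_{0,0}([0,T], \R^n)$ vanishes.

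With all hypotheses in place, Corollary \ref{cor32} yields the existence of the right directional derivative (resp. the Gâteaux differential, resp. the Fréchet $C^1$ regularity on a neighborhood of $\pi_0$) together with the formula in terms of $D_{H,2}f^0$, $D_{H,2}g_i^0$, $D_{H,2}h_j^0$ and the continuous multipliers $\lambda_i$, $\mu_j$. It then remains to translate the abstract parameter-differentials back: applying the integral-functional differentiation result once more gives $D_{H,2}f^0(x(\pi_0), \pi_0)\cdot \varpi = \int_{[0,T]} D_{H,4}L(t, x(\pi_0)(t), x(\pi_0)'(t), \pi_0)\cdot \varpi\,d\mathfrak{m}_1(t)$, and likewise for $g_i^0$ with $\mathfrak{g}_i$ and for $h_j^0$ with $\mathfrak{h}_j$. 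Substituting these identities into the conclusions of the corollary produces exactly the stated expressions (I), (II), (III), completing the proof.
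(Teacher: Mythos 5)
Your proposal is correct and follows essentially the same route as the paper: reduction to Corollary \ref{cor32} via the dictionary $\S = C^1_{0,0}([0,T],\R^n)$, the continuous inner product on $\S^*$ from Lemma \ref{lem512} for ($A^0${\sc dua}), the integral-functional lemmas for ($A^0${\sc fon}1), ($A^0${\sc fon}2), ($A^0${\sc con}1), the Dubois--Reymond equivalence (Lemmas \ref{lem58}--\ref{lem59}) turning (B{\sc con}) into ($A^0${\sc con}2), and the final translation of $D_{H,2}$ of the integral functionals into the integrals of $D_{H,4}$ of the integrands. No gaps.
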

%%%%%%%%%%%%%%%%%%%%%%%%%%%%%%%%%%%%%%%%%%%%%%
\subsection{Nonlinear Integral Functionals.} 
\begin{lemma}\label{lem52} 
Let $E$ be a real normed space, $G$ be an open subset of $E$, $\mathfrak{f} : [0,T] \times G \rightarrow \R$ be a function, and $z_0 \in C^0([0,T], G)$. We consider the following conditions.
\begin{itemize}
\item[{\bf (i)}] $\mathfrak{f} \in C^0([0,T] \times G, \R)$.
\item[{\bf (ii)}] There exist $\rho > 0$ and $\zeta \in \mathcal{L}^1([0,T], \mathcal{B}([0,T]),\mathfrak{m}_1; \R_+)$ s.t., for all $t \in [0,T]$, for all $u_1,u_2 \in B(z_0(t), \rho)$, $\vert \mathfrak{f}(t, u_1) - \mathfrak{f}(t, u_2) \vert \leq \zeta (t) \Vert u_1 - u_2 \Vert$.
\item[{\bf (iii)}] For all $t \in [0,T]$, $D_{H,2} \mathfrak{f}(t, z_0(t))$ exists.
\end{itemize}
We consider the functional $F : C^0([0,T], G) \rightarrow \R$ defined by $F(z) := \int_0^T \mathfrak{f}(t, z(t))dt$ when $z \in C^0([0,T], G)$.\\
Then the following conclusions hold.
\begin{itemize}
\item[{\bf (a)}] Under (i-ii), $F$ is well defined and Lipschitzean on the ball $B_{\Vert \cdot \Vert_{\infty}}(z_0, \rho)$.
\item[{\bf (b)}] Under (i-iii), $F$ is Hadamard differentiable at $z_0$, and for all \\
$h \in C^0([0,T], E)$,  $[t \mapsto D_{H,2}\mathfrak{f}(t, z_0(t)) \cdot h(t)] \in \mathcal{L}^1([0,T], \mathcal{B}([0,T]), \mathfrak{m}_1; \R)$ 
and we have $D_H F(z_0) \cdot h = \int_{[0,T]} D_{H,2} \mathfrak{f}(t,z_0(t)) \cdot h(t) \; d \mathfrak{m}_1(t)$.
\end{itemize}
\end{lemma}
\begin{proof} {\bf (a)} Let $z \in C^0([0,T], G)$. Under (i), since $\mathfrak{f} \in C([0,T] \times G, \R)$, we have $[t \mapsto \mathfrak{f}(t, z(t))] \in C^0([0,T], \R)$; hence this function is Riemann integrable on $[0,T]$, and so $F(z)$ is well defined. Doing a straightforward majorization, we obtain the following inequality.
\begin{equation}\label{eq51}
\forall z,w \in B_{\Vert \cdot \Vert_{\infty}}(z_0, \rho), \hskip3mm  \vert F(z) - F(w) \vert \leq \Vert \zeta \Vert_{L^1} \cdot \Vert z - w \Vert_{\infty}.
\end{equation}
and so the conclusion (a) is proven.\\
{\bf (b)} When $z \in C^0([0,T], G)$, since $z([0,T])$ is compact and since the function $[u \mapsto d(u, E \setminus G) := \inf \{ \Vert u - v \Vert : v \in E \setminus G \}]$ is continuous (since Lipschitzean), using the Optimization Theorem of Weierstrass and the closedness of $E \setminus G$, we can assert that
$\alpha_z := \inf \{ d(u, E \setminus G) : u \in z([0,T]) \} > 0$. We can verify that $B_{\Vert \cdot \Vert_{\infty}}(z, 2^{-1} \alpha_z) \subset C^0([0,T], G)$.\\
Let $h \in C^0([0,T], E)$, $h \neq 0$ ( the case $h = 0$ is evident).\\
 We set $\theta^0 := \Vert h \Vert_{\infty}^{-1} \min \{ \rho, 2^{-1} \alpha_{z_0} \} > 0$. Hence, for all $\theta \in \; ]0, \theta^0[$, for all $t \in [0,T]$, we have $z_0(t) + \theta h(t) \in B(z_0(t), \rho)$. Let $(\theta_m)_{m \in \N} \in \; ]0,\theta^0[^{\N}$ which converges to $0$.  Using (iii), since the Hadamard differentiability implies the G\^ateaux differentiability, we have:
\begin{equation}\label{eq52}
\left.
\begin{array}{l}
D_{H,2} \mathfrak{f}(t,z_0(t)) \cdot h(t) = D^+_{G,2} \mathfrak{f}(t, z_0(t); h(t))\\
= \lim_{m \rightarrow + \infty} \frac{1}{\theta_m} (\mathfrak{f}(t, z_0(t) + \theta_m h(t)) - \mathfrak{f}(t, z_0(t))).
\end{array}
\right\}
\end{equation}
Since $[t \mapsto \mathfrak{f}(t, z_0(t) + \theta_m h(t)) ]$ and $[t \mapsto \mathfrak{f}(t, z_0(t))]$ belong to $C^0([0,T], \R)$, they are Borel functions and therefore, for all $m \in \N$, 
$[t \mapsto  \frac{1}{\theta_m} (\mathfrak{f}(t, z_0(t) + \theta_m h(t)) - \mathfrak{f}(t, z_0(t)))]$ is also a Borel function. Since a pointwise limit of a sequence of Borel functions is a Borel function, using (\ref{eq52}), we obtain:
\begin{equation}\label{eq53}
[t \mapsto D_{H,2} \mathfrak{f}(t,z_0(t)) \cdot h(t)] \in \mathcal{L}^0([0,T], \mathcal{B}([0,T]); \R).
\end{equation}
Using (ii), we obtain, for all $m \in \N$, 
\begin{equation}\label{eq54}
\left\vert \frac{1}{\theta_m} (\mathfrak{f}(t, z_0(t) + \theta_m h(t)) - \mathfrak{f}(t, z_0(t))) \right\vert \leq \zeta(t) \Vert h(t) \Vert \leq \zeta(t) \Vert h \Vert_{\infty}.
\end{equation}
Doing $m \rightarrow + \infty$, we deduce from (\ref{eq52}) and (\ref{eq54}) that 
$\vert D^+_{G,2} \mathfrak{f}(t,z_0(t); h(t)) \vert \leq \zeta(t) \Vert h \Vert_{\infty}$, and consequently, for all $t \in [0,T]$, we have
\begin{equation}\label{eq55} 
\vert D_{H,2} \mathfrak{f}(t, z_0(t)) \cdot h(t) \vert \leq \zeta(t) \Vert h \Vert_{\infty}.
\end{equation}
From (\ref{eq53}) and (\ref{eq55}), we obtain the following property.
\begin{equation}\label{eq56}
[t \mapsto D_{H,2} \mathfrak{f}(t,z_0(t)) \cdot h(t)] \in \mathcal{L}^1([0,T], \mathcal{B}([0,T]), \mathfrak{m}_1; \R).
\end{equation}
Note that, for all $m \in \N$, using (a) and the linearity of the Riemann integral, we have, for all $m \in \N$, $\frac{1}{\theta_m} (F(z_0 + \theta_m h) - F(z_0)) $ = \\
$\int_0^T \frac{1}{\theta_m} (\mathfrak{f}(t, z_0(t) + \theta_m h(t)) - \mathfrak{f}(t, z_0(t))) dt $ = \\\ 
$\int_{[0,T]}  \frac{1}{\theta_m} (\mathfrak{f}(t, z_0(t) + \theta_m h(t)) - \mathfrak{f}(t, z_0(t))) \; d \mathfrak{m}_1(t)$. Then, using (\ref{eq54}) and (\ref{eq52}), we can use the Dominated Convergence Theorem of Lebesgue to obtain that \\
$\lim_{n \rightarrow + \infty}\frac{1}{\theta_m} (F(z_0 + \theta_m h) - F(z_0)) = \int_{[0,T]} D_{H,2}\mathfrak{f}(t, z_0(t)) \cdot h(t) \; d\mathfrak{m}_1(t)$. Using the sequential characterization of the limit, we obtain the existence of $D^+_G F(z_0; h)$ and 
\begin{equation}\label{eq57}
D^+_G F(z_0; h) =  \int_{[0,T]}  D_{H,2} \mathfrak{f}(t, z_0(t)) \cdot h(t) \, d\mathfrak{m}_1(t).
\end{equation}
Using the linearity of the Borel integral and the linearity of the Hadamard differential at a point, we see that $D^+_G F(z_0; \cdot)$ is a linear functional from 
$C^0([0,T], E)$ into $\R$. Note that, using (\ref{eq57}) and (\ref{eq55}), we have:
$\vert D^+_G F(z_0; h) \vert  =  \vert \int_{[0,T]}  D_{H,2} \mathfrak{f}(t, z_0(t)) \cdot h(t) \; d\mathfrak{m}_1(t) \vert
\leq  \int_{[0,T]} \vert  D_{H,2} \mathfrak{f}(t, z_0(t)) \cdot h(t) \vert \; d \mathfrak{m}_1(t) \leq  \int_{[0,T]} ( \zeta(t) \Vert h \Vert_{\infty}) \; d \mathfrak{m}_1(t)$ \\
$ = \Vert \zeta \Vert_{L^1} \Vert h \Vert_{\infty}$,
and so $D^+_G F(z_0; \cdot)$ is linear continuous. Hence we have proven
\begin{equation}\label{eq58}
D_G F(z_0) \; {\rm exists}.
\end{equation}
Since $F$ is Lipschitzean, we can use (\cite{Fl}, p.259) to assert that $F$ is Hadamard differentiable at $z_0$, and the formula of this Hadamard differential is given by this one of its G\^ateaux differential.
\end{proof}
\begin{remark}\label{rem53}
Under the assumptions of Lemma \ref{lem52}, the following assertions hold.
\begin{itemize}
\item[{\bf (i)}] If $E$ is separable, then $[t \mapsto \Vert D_{H,2} \mathfrak{f}(t, z_0(t)) \Vert_*]$ is Borel integrable on $[0,T]$.
\item[{\bf (ii)}] If $E = \R^n$, then $[t \mapsto D_{H,2} \mathfrak{f}(t, z_0(t))]$ is Borel integrable on $[0,T]$ 
\end{itemize}
\end{remark}
\begin{proof} To abridge the writing, we set $\Lambda(t) := D_{H,2} \mathfrak{f}(t, z_0(t))$.\\
{\bf (i)} Since $E$ is separable (and metric), the closed unit ball of $E$, $\overline{B}_E(0,1)$, is also separable (\cite{Di}, (3.10.9), p.45), hence there exists $A \subset \overline{B}_E(0,1)$ which is at most countable and dense in $\overline{B}_E(0,1)$. Using (\ref{eq56}), for all $t \in [0,T]$, 
$\Vert \Lambda(t) \Vert_* = \sup \{ \vert \Lambda(t) \cdot v \vert : v \in \overline{B}_E(0,1) \} = \sup \{  \vert \Lambda(t) \cdot v \vert : v \in A \} \in  \mathcal{L}^0([0,T], \mathcal{B}([0,T]); \R)$ as a supremum of a sequence of functions which belong to $ \mathcal{L}^0([0,T], \mathcal{B}([0,T]); \R)^{\N}$. Using (\ref{eq55}) we obtain that, for all $t \in [0,T]$, $\Vert \Lambda(t) \Vert_* \leq \zeta(t)$, which implies that 
$\Vert \Lambda( \cdot) \Vert_* \in  \mathcal{L}^1([0,T], \mathcal{B}([0,T]), \mathfrak{m}_1; \R_+)$.\\
{\bf (ii)} Since $dim \R^{n*}$ is finite, the $\mathfrak{m}_1$-integrability of $\Lambda$ is equivalent to the $\mathfrak{m}_1$-integrability of its coordinate functions. Let $(e_i)_{1 \leq i \leq n}$ be the canonical basis of $\R^n$ and $(e^*_i)_{1 \leq i \leq n}$ its dual basis. Note that we have $\Lambda(t) = \sum_{1 \leq i \leq n}(\Lambda(t) \cdot e_i) e^*_i$. From (\ref{eq56}) we know that, for all $i \in \{1,...,n \}$, $\Lambda(\cdot) \cdot e_i$ is $\mathfrak{m}_1$-integrable on $[0,T]$, and consequently we obtain that $\Lambda \in \mathcal{L}^1([0,T], \mathcal{B}([0,T]), \mathfrak{m}_1; \R^{n*})$.
\end{proof}
\begin{remark}\label{rem54}
Consider the following strengthened condition:\\
{\bf (St)}: $\mathfrak{f} \in C^0([0,T] \times G, \R)$, $D_{F,2}\mathfrak{f}(t, \cdot)$ exists on $G$ for all $t \in [0,T]$, and $D_{F,2}\mathfrak{f} \in C^0([0,T] \times G, E^*)$.\\
Under (St), the Nemytskii operator $\mathcal{N}_{\mathfrak{f}} : C^0([0,T], G)) \rightarrow C^0([0,T], \R)$, defined by $\mathcal{N}_{\mathfrak{f}}(z)(t) := \mathfrak{f}(t, z(t))$ for all $t \in [0,T]$ and for all $z \in C^0([0,T], G)$, is Fr\'echet $C^1$ and $(D_F \mathcal{N}_{\mathfrak{f}}(z) \cdot h)(t) = D_{F,2} \mathfrak{f}(t, z(t)) \cdot h(t)$ for all $t \in [0,T]$ and for all $h \in C^0([0,T], E)$. This result is proven in \cite{BK} (Lemma 12). Since the Riemann integral defines a linear continuous functional $\mathcal{I}$ on $C^0([0,T], \R)$, the functional $F$, defined by $F(z) := \int_0^T \mathfrak{f}(t, z(t)) \; dt$, verifies $F = \mathcal{I} \circ \mathcal{N}_{\mathfrak{f}}$, and therefore $F$ is Fr\'echet $C^1$, and, using the Chain Rule, we have $D_F F(z) \cdot h = \int_0^T D_{F,2}\mathfrak{f}(t, z(t)) \cdot h(t) \; dt$. Under (St), the assumptions (i) and (iii) of Lemma \ref{lem52} are fulfilled. Using Lemma 12 of \cite{BK}, the Nemytskii operator $\mathcal{N}_{D_{F,2}{\mathfrak{f}}}$ is continuous, and since a mapping which is continuous at a point is bounded on a neighborhood of this point, and using the Mean Value Inequality we see that (ii) of Lemma \ref{lem52} is fulfilled. Hence Lemma \ref{lem52} contains a contribution to improve the results of \cite{BK} on the differentiability of the nonlinear integral functionals.
\end{remark}
\begin{lemma}\label{lem55}
let $M$ be an open subset of $\R^n$, ${\bf P}$ be an open subset of $Y$, $\phi : [0,T] \times M \times \R^n \times {\bf P} \rightarrow \R$ be a function, $\pi_0 \in {\bf P}$ and $[\pi \mapsto  {\bf x}(\pi)] $ be a mapping from ${\bf P}$ into $C^1([0,T], M)$. We consider the following conditions.
\begin{itemize}
\item[{\bf (i)}] $\phi \in C^0([0,T] \times M \times \R^n \times {\bf P}, \R)$.
\item[{\bf (ii)}] $[ \pi \mapsto {\bf x}(\pi)]$ is continuous at $\pi_0$.
\item[{\bf (iii)}] There exist $\varrho > 0$ and $\gamma \in \mathcal{L}^1([0,T], \mathcal{B}([0,T]), \mathfrak{m}_1; \R_+)$ s.t. $\forall t \in [0,T]$,$ \\\forall u,u_1 \in B({\bf x}(\pi_0)(t), \varrho)$, $\forall v, v_1 \in B({\bf x}(\pi_0)'(t), \varrho)$, $\forall \pi, \pi_1 \in B(\pi_0, \varrho)$, \\
$\vert \phi(t,u,v, \pi) - \phi(t,u_1,v_1, \pi_1) \vert \leq \gamma(t) (\Vert u-u_1 \Vert + \Vert v -v_1 \Vert + \Vert \pi - \pi_1 \Vert)$.
\item[{\bf (iv)}] For all $t \in [0,T]$, for all $\pi \in B(\pi_0, \varrho)$, $D_{H, (2,3,4)} \phi(t, {\bf x}(\pi)(t), {\bf x}(\pi)'(t), \pi)$ exists.
\item[{\bf (v)}] For all $t \in [0,T]$, $[ \pi \mapsto D_{H, (2,3)} \phi(t,{\bf x}(\pi)(t),{\bf x}(\pi)'(t), \pi)]$ is continuous on $B(\pi_0, \varrho)$.
\item[{\bf (vi)}] For all $t \in [0,T]$, $[ \pi \mapsto D_{H, (2,3,4)} \phi(t, {\bf x}(\pi)(t), {\bf x}(\pi)'(t), \pi)]$ is continuous on $B(\pi_0, \varrho)$.
\item[{\bf (vii)}] For all $\pi \in B(\pi_0, \varrho)$, $[t \mapsto D_{H,(2,3,4)}\phi(t, {\bf x}(\pi)(t), {\bf x}(\pi)'(t), \pi)]$ belongs to $\mathcal{L}^0([0,T], \mathcal{B}([0,T]); (\R^n \times \R^n \times Y)^*)$.
\end{itemize}
We consider the functional $\Phi : C^1([0,T], M) \times {\bf P} \rightarrow \R$ defined by $\Phi({\bf x},\pi) := \int_0^T \phi(t, {\bf x}(t), {\bf x}'(t), \pi) \; dt$.
Then the following conclusions hold.
\begin{itemize}
\item[{\bf (a)}] Under (i-iii), there exists $\sigma \in \;]0, \varrho]$ s.t. for all $\pi \in B(\pi_0, \sigma)$, $[{\bf x} \mapsto \Phi({\bf x}, \pi)]$  
is Lipschitzean on $B_{C^1}({\bf x}(\pi), \sigma)$.
\item[{\bf (b)}] Under (i-iv), for all $\pi \in B(\pi_0, \sigma)$, $D_H \Phi({\bf x}(\pi), \pi)$ exists, and for all ${\bf h} \in C^1([0,T], \R^n)$, for all $\varpi \in Y$, \\
$[t \mapsto D_{H, (2,3,4)} \phi(t, {\bf x}(\pi)(t), {\bf x}(\pi)'(t), \pi) \cdot ( {\bf h}(t), {\bf h}'(t), \varpi)]$ belongs to\\ $\mathcal{L}^1([0,T], \mathcal{B}([0,T]), \mathfrak{m}_1; \R)$ and 
\[
\begin{array}{ccl}
D_H \Phi({\bf x}(\pi), \pi) \cdot ({\bf h}, \varpi) & = & \int_{[0,T]} D_{H,2} \phi(t, {\bf x}(\pi)(t), {\bf x}(\pi)'(t), \pi) \cdot {\bf h}(t) \; d \mathfrak{m}_1(t)\\
\null & \null & + \int_{[0,T]} D_{H,3} \phi(t, {\bf x}(\pi)(t), {\bf x}(\pi)'(t), \pi) \cdot {\bf h}'(t) \; d \mathfrak{m}_1(t)\\
\null & \null & + \int_{[0,T]} D_{H,4} \phi(t, {\bf x}(\pi)(t), {\bf x}(\pi)'(t), \pi) \cdot \varpi \; d \mathfrak{m}_1(t).
\end{array}
\]
\item[{\bf (c)}] Under (i-v), $[ \pi \mapsto  D_{H,1} \Phi({\bf x}(\pi), \pi)]$ is continuous from $B(\pi_0, \sigma)$ into\\ $(C^1([0,T], \R^n))^*$.
\item[{\bf (d)}] Under (i-vii), $[ \pi \mapsto  D_{H} \Phi({\bf x}(\pi), \pi)]$ is continuous from $B(\pi_0, \sigma)$ into\\ $(C^1([0,T], \R^n) \times Y)^*$.
\end{itemize}
\end{lemma}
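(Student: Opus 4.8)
The plan is to reduce the whole lemma to the parameter-free Lemma \ref{lem52} by factoring $\Phi$ through a continuous linear map. Set $E := \R^n \times \R^n \times Y$ with the sum norm, $G := M \times \R^n \times {\bf P}$ (open in $E$), and take $\mathfrak{f} := \phi$. Consider the map $\Theta : C^1([0,T], \R^n) \times Y \rightarrow C^0([0,T], E)$ defined by $\Theta({\bf x}, \pi)(t) := ({\bf x}(t), {\bf x}'(t), \pi)$; it is linear and continuous, and carries $C^1([0,T], M) \times {\bf P}$ into $C^0([0,T], G)$. Writing $z_\pi := \Theta({\bf x}(\pi), \pi) \in C^0([0,T], G)$, we have $\Phi({\bf x}, \pi) = F(\Theta({\bf x}, \pi))$, where $F(z) := \int_0^T \mathfrak{f}(t, z(t)) \, dt$ is exactly the functional of Lemma \ref{lem52}. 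The first task is to fix $\sigma$: using the continuity hypothesis (ii), choose $\sigma \in \;]0, \varrho]$ small enough that for every $\pi \in B(\pi_0, \sigma)$ the point $z_\pi$ and the $\Vert \cdot \Vert_\infty$-balls of radius a fixed multiple of $\sigma$ around it stay inside the region $\{u \in B({\bf x}(\pi_0)(t), \varrho),\, v \in B({\bf x}(\pi_0)'(t), \varrho),\, \pi \in B(\pi_0,\varrho)\}$ where the Lipschitz estimate (iii) is available. Then, for each such $\pi$, the hypotheses (i)--(iii) of Lemma \ref{lem52} hold at the base point $z_\pi$: (i) is our (i); the Lipschitz condition of Lemma \ref{lem52}(ii) is our (iii) read with the sum norm on $E$ (with $\zeta = \gamma$); and the pointwise Hadamard differentiability of Lemma \ref{lem52}(iii) is our (iv).

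Conclusion (a) is then immediate: by Lemma \ref{lem52}(a), $F$ is Lipschitzean on a $\Vert \cdot \Vert_\infty$-ball about $z_\pi$, and since $\Theta(\cdot, \pi)$ is affine with bounded linear part $C^1 \to C^0$ and carries $B_{C^1}({\bf x}(\pi), \sigma)$ into that ball, the composite $[{\bf x} \mapsto \Phi({\bf x}, \pi)] = F \circ \Theta(\cdot, \pi)$ is Lipschitzean there. For (b), Lemma \ref{lem52}(b) gives that $F$ is Hadamard differentiable at $z_\pi$; since $\Theta$ is continuous linear, hence Hadamard differentiable with $D_H \Theta = \Theta$, the Chain Rule for Hadamard differentials (\cite{Fl}) yields Hadamard differentiability of $\Phi$ at $({\bf x}(\pi), \pi)$ with $D_H \Phi({\bf x}(\pi), \pi) \cdot ({\bf h}, \varpi) = D_H F(z_\pi) \cdot \Theta({\bf h}, \varpi)$. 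Applying the integral formula of Lemma \ref{lem52}(b) to the direction $\Theta({\bf h}, \varpi)(t) = ({\bf h}(t), {\bf h}'(t), \varpi)$, and using $D_{H,2}\mathfrak{f}(t, z_\pi(t)) = D_{H,(2,3,4)}\phi(t, {\bf x}(\pi)(t), {\bf x}(\pi)'(t), \pi)$, produces the three-term expression. To split the single integral into three, apply the integrability assertion of Lemma \ref{lem52}(b) to the directions $t \mapsto ({\bf h}(t), 0, 0)$, $t \mapsto (0, {\bf h}'(t), 0)$ and $t \mapsto (0, 0, \varpi)$; this shows the three integrands are separately $\mathfrak{m}_1$-integrable and, by taking coordinate directions, that $t \mapsto D_{H,2}\phi(t, z_\pi(t))$ and $t \mapsto D_{H,3}\phi(t, z_\pi(t))$ lie in $\mathcal{L}^1([0,T], \R^{n*})$, so linearity of the Borel integral justifies the splitting.

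Parts (c) and (d) are continuity statements for $\pi \mapsto D_{H,1} \Phi({\bf x}(\pi), \pi)$, respectively $\pi \mapsto D_H \Phi({\bf x}(\pi), \pi)$, and both follow from the formula in (b) via the Dominated Convergence Theorem. For (c), estimating the operator norm in $(C^1([0,T], \R^n))^*$ by $\Vert {\bf h} \Vert_\infty \le \Vert {\bf h} \Vert_{C^1}$ and $\Vert {\bf h}' \Vert_\infty \le \Vert {\bf h} \Vert_{C^1}$ gives, for $\pi, \pi' \in B(\pi_0, \sigma)$, the bound $\Vert D_{H,1}\Phi({\bf x}(\pi),\pi) - D_{H,1}\Phi({\bf x}(\pi'),\pi') \Vert_* \le \int_{[0,T]} \Vert D_{H,2}\phi(t,z_\pi(t)) - D_{H,2}\phi(t,z_{\pi'}(t)) \Vert_* \, d\mathfrak{m}_1(t) + \int_{[0,T]} \Vert D_{H,3}\phi(t,z_\pi(t)) - D_{H,3}\phi(t,z_{\pi'}(t)) \Vert_* \, d\mathfrak{m}_1(t)$. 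The integrands tend to $0$ pointwise in $t$ by (v), are measurable by the integrability established in (b), and are dominated by $2\gamma(t)$ because the bound (\ref{eq55}) of Lemma \ref{lem52}, applied at $z_\pi$, forces $\Vert D_{H,(2,3,4)}\phi(t, z_\pi(t)) \Vert_* \le \gamma(t)$; the Dominated Convergence Theorem then yields continuity. Part (d) is identical but carries the additional term $\int_{[0,T]} \Vert D_{H,4}\phi(t,z_\pi(t)) - D_{H,4}\phi(t,z_{\pi'}(t)) \Vert_{Y^*} \, d\mathfrak{m}_1(t)$, whose pointwise convergence to $0$ uses (vi) (the product dual norm controls the $Y^*$-component), whose measurability uses (vii), and whose domination by $\gamma$ again comes from (iii).

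The main obstacle is precisely this last term in (d). Because $Y$ is not assumed separable, one cannot appeal to a separability argument to measure $t \mapsto \Vert D_{H,4}\phi(t, z_\pi(t)) \Vert_{Y^*}$, and one must instead extract uniform control over the unit ball of $Y$ directly rather than only for each fixed $\varpi$. This is exactly what hypotheses (vii) and (vi) supply: (vii) gives Borel measurability of the map into the product dual, hence of the $Y^*$-norm of the difference (projection onto $Y^*$ is continuous linear, the norm is continuous, and compositions of Borel maps are Borel), while (vi) gives the pointwise-in-$t$ convergence of that $Y^*$-component; together they let the Dominated Convergence Theorem act on the dual-norm integrand and deliver operator-norm continuity in $(C^1([0,T], \R^n) \times Y)^*$. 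A secondary point needing care is the uniform choice of $\sigma$ in the first step, so that the single Lipschitz estimate (iii) anchored at $\pi_0$ remains usable at every base point $z_\pi$ with $\pi \in B(\pi_0, \sigma)$; this is where the continuity (ii) of $\pi \mapsto {\bf x}(\pi)$ enters.
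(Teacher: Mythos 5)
Your proposal is correct and follows essentially the same route as the paper: reduction to Lemma \ref{lem52} via the continuous linear operator $({\bf x},\pi)\mapsto({\bf x},{\bf x}',\pi)$, the Chain Rule for Hadamard differentials for (b), and dominated convergence with the $2\gamma$ bound, pointwise convergence from (v)/(vi), and measurability from finite-dimensionality resp.\ (vii) for (c) and (d). Your justification of the splitting of the integral in (b) via coordinate directions is the content of the paper's Remark \ref{rem57}, so nothing is missing.
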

\begin{proof} Let $\varrho$ be given by (iii). Using (ii), there exists $\sigma \in \;] 0,\frac{\varrho}{2}]$ s.t. $\Vert {\bf x}(\pi) - {\bf x}(\pi_0) \Vert_{C^1} \leq \frac{\varrho}{2}$ when $\Vert \pi - \pi_0 \Vert < \sigma$. Using (iii), we obtain the following property.
\begin{equation}\label{eq59}
\left.
\begin{array}{l}
\forall \pi \in B(\pi_0,\sigma), \forall t \in [0,T], \forall u_1, u_2 \in B({\bf x}(\pi)(t), \sigma),
\forall v_1, v_2 \in B({\bf x}(\pi)'(t), \sigma),\\
 \forall \pi_1, \pi_2 \in B(\pi, \sigma),
\vert \phi(t,u_1,v_1,\pi_1) - \phi(t, u_2,v_2,\pi_2) \vert \leq \\
\gamma(t) (\Vert u_1 - u_2 \Vert + \Vert v_1 - v_2 \Vert + \Vert \pi_1 - \pi_2 \Vert).
\end{array}
\right\}
\end{equation}
We want to use Lemma \ref{lem52}. We set $E := \R^n \times \R^n \times Y$ and $G := M \times \R^n \times {\bf P}$ which is an open subset of $E$. At each $\pi \in {\bf P}$ we associate the constant mapping $\pi_c := [ t \mapsto \pi] \in C^0([0,T], {\bf P})$. We define the function ${\mathfrak f} : [0,T] \times G \rightarrow \R$ by setting ${\mathfrak f}(t,( u,v,\pi)) := \phi(t,u,v,\pi)$. When ${\bf x} \in C^1([0,T], M)$ and $\pi \in {\bf P}$, setting $z(t) := ({\bf x}(t), {\bf x}'(t), \pi_c(t))$, we have $z \in C^0([0,T], G)$, and $F(z) = \int_0^T {\mathfrak f}(t,z(t)) \; dt = \int _0^T \phi(t, {\bf x}(t), {\bf x}'(t), \pi) \; dt = \Phi({\bf x},\pi)$. Hence $\Phi$ can be viewed as a restriction of $F$ to an open subset of a closed (since complete) vector subspace.\\
{\bf Proof of (a).} Let $\pi \in B(\pi_0, \sigma)$; we set $z_0(t) := ({\bf x}(\pi)(t), {\bf x}(\pi)'(t), (\pi)_c(t))$. Now we verify that that the assumptions of Lemma \ref{lem52} are fulfilled. Note that (i) implies that the assumption (i) of Lemma \ref{lem52} is fulfilled. After (\ref{eq59}), the assumption (ii) of Lemma \ref{lem52} is fulfilled. Hence we can use the conclusion (a) of Lemma \ref{lem52} to ensure that $[{\bf x} \mapsto \Phi({\bf x}, \pi)]$ is Lipschitzean on $B_{C^1}({\bf x}(\pi), \sigma)$, and so the conclusion (a) is proven.\\
{\bf Proof of (b).} We arbitrarily fix $\pi \in B(\pi_0, \sigma)$, and we set \\
$z_0(t) := ({\bf x}(\pi)(t), {\bf x}(\pi)'(t), \pi_c(t))$. We can verify that assumptions of Lemma \ref{lem52} are fulfilled for $z_0$. In the proof of (a) we have yet proved that assumptions (i) and (ii) of Lemma \ref{lem52} are fulfilled. Moreover assumption (iv) of Lemma \ref{lem55} implies that the assumption (iii) of Lemma \ref{lem52} is fulfilled. Hence we can use the conclusion (b) of Lemma \ref{lem52} and assert that $D_H \Phi({\bf x}(\pi), \pi) = D_HF(z_0)$ exists. We introduce the operator 
$\Psi : C^1([0,T], \R^n) \times Y \rightarrow C^0([0,T], \R^n) \times C^0([0,T], \R^n) \times Y$ defined by $\Psi({\bf x},\pi) := ({\bf x},{\bf x}',\pi)$. $\Psi$ is linear continuous, hence it is Fr\'echet differentiable, and consequently Hadamard differentiable. We set $\Psi_0$ the restriction of $\Psi$ to $C^1([0,T], M) \times {\bf P}$ and we note that $\Phi = F \circ \Psi_0$. Using Lemma \ref{lem52}, we know that $[t \mapsto D_{H,2} \mathfrak{f}(t, z_0(t)) \cdot w(t)]$ belongs  to $\mathcal{L}^1([0,T], \mathcal{B}([0,T]), \mathfrak{m}_1; \R)$ for all $w \in C^0([0,T], E)$, and $D_HF(z_0) \cdot w = \int_{[0,T]}D_{H,2} \mathfrak{f}(t,z_0(t)) \cdot w(t) \; d \mathfrak{m}_1(t)$. Note that $D_{H,2} \mathfrak{f}(t, z_0(t)) = D_{H, (2,3,4)} \phi(t, {\bf x}(\pi)(t), {\bf x}(\pi)'(t), \pi)$ and $D_H \Psi_0({\bf x}(\pi), \pi) \cdot ({\bf h}, \varpi) = \Psi({\bf h}, \varpi) = ({\bf h}, {\bf h}', \varpi)$, and using the Chain Rule, we obtain the following formula.
\begin{equation}\label{eq510}
\left.
\begin{array}{l}
D_H \Phi({\bf x}(\pi), \pi) \cdot ({\bf h}, \varpi) = \\
\int_{[0,T]} D_{H, (2,3,4)} \phi(t, {\bf x}(\pi)(t), {\bf x}(\pi)'(t), \pi) \cdot ({\bf h}(t), {\bf h}'(t), \varpi) \; d \mathfrak{m}_1(t).
\end{array}
\right\}
\end{equation}
From this last relation we deduce the formula of the conclusion (b).\\
{\bf Proof of (c).} From (\ref{eq510}) we deduce the following formula.
\begin{equation}\label{eq511}
\left.
\begin{array}{l}
D_{H,1} \Phi({\bf x}(\pi), \pi) \cdot {\bf h} = \\
\int_{[0,T]} D_{H, (2,3)} \phi(t, {\bf x}(\pi)(t), {\bf x}(\pi)'(t), \pi) \cdot ({\bf h}(t), {\bf h}'(t)) \; d \mathfrak{m}_1(t).
\end{array}
\right\}
\end{equation}
Let $\pi \in B(\pi_0, \sigma)$ and $(\pi_k)_{k \in \N} \in B(\pi_0, \sigma)^{\N}$ which converges to $\pi$. When $k \in \N$, and $t \in [0,T]$, we set 
$$\Gamma_k(t) := \Vert  D_{H, (2,3)} \phi(t,{\bf x}(\pi_k)(t), {\bf x}(\pi_k)'(t), \pi_k) -  D_{H, (2,3)} \phi(t, {\bf x}(\pi)(t), {\bf x}(\pi)'(t), \pi) \Vert_*$$
where the norm is the norm of $(\R^n \times \R^n)^*$. From (\ref{eq59}) and (v), we obtain the following properties.
\begin{equation}\label{eq512}
\forall t \in [0,T], \; \Gamma_k(t) \leq 2 \gamma(t), \; {\rm and} \; \lim_{k \rightarrow + \infty} \Gamma_k(t) = 0.
\end{equation}
Let $p \in \{ \pi \} \cup \{ \pi_k : k \in \N \}$. Since, for all ${\bf h}, {\bf k} \in C^0([0,T], \R^n)$, \\
$[t \mapsto D_{H, (2,3)} \phi(t, {\bf x}(p)(t), {\bf x}(p)'(t), p) \cdot ({\bf h}(t),{\bf k}(t))] \in \mathcal{L}^1([0,T], \mathcal{B}([0,T]), \mathfrak{m}_1; \R)$, we have that $[t \mapsto D_{H, (2,3)} \phi(t, {\bf x}(p)(t), {\bf x}(p)'(t), p) \cdot ({\bf h}(t),{\bf k}(t))] \in \mathcal{L}^0([0,T], \mathcal{B}([0,T]); \R)$. Hence, for all $v,w \in \R^n$,\\
 $[t \mapsto D_{H, (2,3)} \phi(t, {\bf x}(p)(t), {\bf x}(p)'(t), p) \cdot (v,w)] \in \mathcal{L}^0([0,T], \mathcal{B}([0,T]); \R)$. Let $(e_i)_{1 \leq i \leq 2n}$ be the canonical basis of $\R^n \times \R^n$, and $(e_i^*)_{1 \leq i \leq 2n}$ denotes its dual basis. Note that, for all $t \in [0,T]$, we have
$$D_{H, (2,3)} \phi(t, {\bf x}(p)(t), {\bf x}(p)'(t), p) = \sum_{1 \leq i \leq 2n}( D_{H, (2,3)} \phi(t, {\bf x}(p)(t), {\bf x}(p)'(t), p) \cdot e_i) e_i^*,$$
hence, as a composition of Borel functions, we obtain
$$
[t \mapsto D_{H, (2,3)} \phi(t, {\bf x}(p)(t), {\bf x}(p)'(t), p)]
\in \mathcal{L}^0([0,T], \mathcal{B}([0,T]); (\R^n \times \R^n)^*).
$$
As compositions of Borel functions, we deduce of this property that 
\begin{equation}\label{eq513}
\forall k \in \N, [t \mapsto \Gamma_k(t)] \in  \mathcal{L}^0([0,T], \mathcal{B}([0,T]); \R).
\end{equation}
Since $\gamma$ is $\mathfrak{m}_1$-integrable on $[0,T]$, from (\ref{eq513}) and (\ref{eq512}), we obtain that $\Gamma_k$ is $\mathfrak{m}_1$-integrable on $[0,T]$ for all $k \in \N$. Hence we can do the following majorizations. For all $k \in \N$, for all ${\bf h} \in C^1([0,T], \R^n)$ s.t. $\Vert {\bf h} \Vert_{C^1} \leq 1$, we have 
\[ 
\begin{array}{l}
\vert D_{H,1} \Phi({\bf x}(\pi_k), \pi_k) \cdot h - D_{H,1} \Phi({\bf x}(\pi)), \pi) \cdot {\bf h} \vert\\
= \vert \int_{[0,T]} ( D_{H,(2,3)} \phi(t, {\bf x}(\pi_k)(t), {\bf x}(\pi_k)'(t), \pi_k) \\
- D_{H, (2,3)} \phi(t, {\bf x}(\pi)(t), {\bf x}(\pi)'(t), \pi))\cdot( {\bf h}(t), {\bf h}'(t)) \; d \mathfrak{m}_1(t) \vert\\
\leq \int_{[0,T]} (\Gamma_k(t) \Vert ({\bf h}(t), {\bf h}'(t)) \Vert\; d \mathfrak{m}_1(t)
\leq ( \int_{[0,T]} \Gamma_k(t)\; d \mathfrak{m}_1(t)) \Vert {\bf h} \Vert_{C^1}.
\end{array}
\]
Taking the supremum on the ${\bf h} \in C^1([0,T], \R^n)$ s.t. $\Vert {\bf h} \Vert_{C^1} \leq 1$, we obtain 
\begin{equation}\label{eq514}
\Vert D_{H,1} \Phi({\bf x}(\pi_k), \pi_k)  - D_{H,1} \Phi({\bf x}(\pi)), \pi) \Vert_{(C^1([0,T], \R^n))^*} \leq \int_{[0,T]} \Gamma_k(t) \; d \mathfrak{m}_1(t).
\end{equation}
Using (\ref{eq514}) and (\ref{eq512}) we can apply the dominated convergence theorem of Lebesgue to obtain that\\
$\lim_{k \rightarrow + \infty} \Vert D_{H,1} \Phi({\bf x}(\pi_k), \pi_k)  - D_{H,1} \Phi({\bf x}(\pi)), \pi) \Vert_{(C^1([0,T], \R^n))^*} = 0$, and using the sequential characterization of the continuity, we have proven the conclusion (c).\\
{\bf Proof of (d).} Let $\pi \in B(\pi_0, \sigma)$ and $(\pi_k)_{k \in \N} \in B(\pi_0, \sigma)^{\N}$ which converges to $\pi$. When $k \in \N$ and $t \in [0,T]$, we set
$$\Delta_k(t) := \Vert D_{H,(2,3,4)} \phi(t,{\bf x}(\pi_k)(t), {\bf x}(\pi_k)'(t), \pi_k) - D_{H, (2,3,4)} \phi(t, {\bf x}(\pi)(t), {\bf x}(\pi)'(t), \pi) \Vert_*,$$
where the norm is the norm of $(\R^n \times \R^n \times Y)^*$. Using (vii), as compositions of Borel functions we have
\begin{equation}\label{eq515}
\forall k \in \N, [t \mapsto \Delta_k(t)] \in \mathcal{L}^0([0,T], \mathcal{B}([0,T]); \R_+).
\end{equation}
Proceeding as in the proof of (c) to establish (\ref{eq514}), we obtain, for all  $\varpi \in Y$ and for all ${\bf h} \in C^1([0,T], \R^n)$ s.t. $\Vert {\bf h} \Vert_{C^1} + \Vert \varpi \Vert \leq 1$,\\
 $\vert D_{H} \Phi({\bf x}(\pi_k), \pi_k) \cdot ({\bf h}, \varpi)  - D_{H} \Phi({\bf x}(\pi)), \pi) \cdot({\bf h}, \varpi) \vert 
\leq (\int_{[0,T]} \Delta_k(t) \; d \mathfrak{m}_1(t)) (\Vert {\bf h} \Vert_{C^1} + \Vert \varpi \Vert )$, and taking the l.u.b. on the $({\bf h}, \varpi)$ s.t. $\Vert {\bf h} \Vert_{C^1} + \Vert \varpi \Vert \leq 1$, we obtain, for all $k \in \N$,
\begin{equation}\label{eq516}
\Vert D_H \Phi({\bf x}(\pi_k), \pi_k) - D_H \Phi({\bf x}(\pi), \pi) \Vert_{(C^1([0,T], \R^n) \times Y)^*} \\
\leq \int_{[0,T]} \Delta_k(t) \; d \mathfrak{m}_1(t).
\end{equation}
From assumption (vi), we deduce that $\lim_{k \rightarrow + \infty} \Delta_k(t) = 0$ for all $t \in [0,T]$. Proceeding as in the proof of (c), we obtain
$0 \leq \Delta_k(t) \leq 2 \gamma(t)$ for all $k \in \N$ and for all $t \in [0,T]$. Then we can use the dominated convergence theorem of Lebesgue to obtain $\lim_{k \rightarrow + \infty} \int_{[0,T]} \Delta_k(t) \; d \mathfrak{m}_1(t) = 0$. From (\ref{eq514}), we deduce that $\lim_{k \rightarrow + \infty} \Vert D_H \Phi({\bf x}(\pi_k), \pi_k) - D_H \Phi({\bf x}(\pi), \pi) \Vert_{(C^1([0,T], \R^n) \times Y)^*} = 0$, and using the sequential characterization of the continuity, (d) is proven.
\end{proof}
\begin{remark}\label{rem56} Working as in the proof of Remark \ref{rem53}, if $Y$ is separable, under the assumption (vii) we obtain that $[t \mapsto \Delta_k(t)]$ is a Borel function without to use assumption (vii).
\end{remark}
\begin{remark}\label{rem57}  We fix $\pi \in B(\pi_0, \sigma)$. The property on the integrability of $[t \mapsto D_{H,(2,3,4)}\phi (t, {\bf x}(\pi)(t), {\bf x}(\pi)'(t), \pi) \cdot ({\bf h}(t), {\bf h}'(t), \varpi)]$ given in the conclusion (b) of Lemma \ref{lem55} implies the two following properties.
$$ (b1) \hskip7mm [t \mapsto D_{H,2} \phi (t, {\bf x}(\pi)(t), {\bf x}(\pi)'(t), \pi) ] \in \mathcal{L}^1([0,T], \mathcal{B}([0,T]), \R^{n*}; \mathfrak{m}_1).$$
$$ (b2) \hskip7mm [t \mapsto D_{H,3} \phi (t, {\bf x}(\pi)(t), {\bf x}(\pi)'(t), \pi) ] \in \mathcal{L}^1([0,T], \mathcal{B}([0,T]), \R^{n*}; \mathfrak{m}_1).$$
To prove them we consider $(e_i)_{1 \leq i \leq n}$ the canonical basis of $\R^n$; we denote by ${\bf e}_i$  the constant function on $[0,T]$ equal to $e_i $. We have ${\bf e}_i \in C^1([0,T], \R^n)$ and ${\bf e}_i' = 0$. Hence the function $[t \mapsto D_{H,2} \phi(t,{\bf x}(\pi)(t), {\bf x}(\pi)'(t), \pi) \cdot e_i =$ \\ 
$D_{H,(2,3,4)}\phi (t, {\bf x}(\pi)(t), {\bf x}(\pi)'(t), \pi) \cdot ({\bf e}_i(t), 0, 0)]$ is $\mathfrak{m}_1$-integrable on $[0,T]$ for all $i \in \{1, ..., n \}$, and so (b1) is proven.\\
Now we consider the function ${\bf a}_i := [ t \mapsto t e_i] \in C^1([0,T], \R^n)$ for all $i \in \{ 1,...,n \}$. Hence the function $[t \mapsto D_{H,2} \phi (t, {\bf x}(\pi)(t), {\bf x}(\pi)'(t), \pi) \cdot {\bf a}_i(t) +$ \\
$ D_{H,3} \phi (t, {\bf x}(\pi)(t), {\bf x}(\pi)'(t), \pi) \cdot e_i) = D_{H,(2,3,4)}\phi (t, {\bf x}(\pi)(t), {\bf x}(\pi)'(t), \pi)  \cdot ( {\bf a}_i(t), {\bf a}_i'(t), 0)]$  is $\mathfrak{m}_1$-integrable on $[0,T]$ for all $i \in \{1, ..., n \}$. \\
Note that $[t \mapsto D_{H,2} \phi (t, {\bf x}(\pi)(t), {\bf x}(\pi)'(t), \pi) \cdot (t e_i)]$ is a Borel function, and we have, for all $t \in [0,T]$, \\
 $\vert D_{H,2} \phi (t, {\bf x}(\pi)(t), {\bf x}(\pi)'(t), \pi) \cdot (t e_i) \vert \leq \Vert D_{H,2} \phi (t, {\bf x}(\pi)(t),{\bf x}(\pi)'(t), \pi) \Vert T \Vert e_i \Vert$ which is $\mathfrak{m}_1$-integrable on $[0,T]$. Therefore $[t \mapsto D_{H,2} \phi (t, {\bf x}(\pi)(t), {\bf x}(\pi)'(t), \pi) \cdot (t e_i)]$ is  $\mathfrak{m}_1$-integrable on $[0,T]$. Since $D_{H,3} \phi (t, {\bf x}(\pi)(t), {\bf x}(\pi)'(t), \pi) \cdot e_i =$ \\
 $ [ D_{H,2} \phi (t, {\bf x}(\pi)(t), {\bf x}(\pi)'(t), \pi) \cdot (t e_i)$ \\
 $ + D_{H,3} \phi (t, {\bf x}(\pi)(t), {\bf x}(\pi)'(t), \pi) \cdot e_i -  D_{H,2} \phi (t, {\bf x}(\pi)(t), {\bf x}(\pi)'(t), \pi) \cdot (t e_i)]$, we obtain that $[t \mapsto D_{H,3} \phi (t, {\bf x}(\pi)(t), {\bf x}(\pi)'(t), \pi) \cdot e_i]$  is $\mathfrak{m}_1$-integrable on $[0,T]$ as a difference of $\mathfrak{m}_1$-integrable functions for all $i \in \{1,...,n \}$; and so (b2) is proven.
\end{remark}

%%%%%%%%%%%%%%%%%%%%%%%%%%%%%%%%%%%%%%%%%%%%%
\subsection{Euler equation}
\begin{lemma}\label{lem58} In the setting of Lemma \ref{lem55}, let $\pi \in B(\pi_0, \alpha)$. Under conditions (i-iv) of Lemma \ref{lem55}, The two following assertions are equivalent. 
\begin{itemize}
\item[(i)] $\forall {\bf  h} \in C^1_{0,0}([0,T], \R^n)$, \;\; $D_{H,1} \Phi({\bf x}(\pi), \pi) \cdot {\bf h} = 0$.
\item[(ii)] There exists $c \in \R^{n*}$ s.t. $\mathfrak{m}_1$-a.e. $t \in [0,T]$,\\
$D_{H,3} \phi(t, {\bf x}(\pi)(t), {\bf x}(\pi)'(t), \pi) = \int_{[0,t]} D_{H,2} \phi(s, {\bf x}(\pi)(s), {\bf x}(\pi)'(s), \pi)\; d \mathfrak{m}_1(s) + c.$
\end{itemize}
\end{lemma}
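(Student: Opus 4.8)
The plan is to reduce the equivalence to the Dubois-Reymond (fundamental) lemma of the calculus of variations. First I would record, from the conclusion (b) of Lemma~\ref{lem55} with $\varpi = 0$ (equivalently the formula (\ref{eq511})) together with Remark~\ref{rem57}, that the two functions
$$A := [t \mapsto D_{H,2} \phi(t, {\bf x}(\pi)(t), {\bf x}(\pi)'(t), \pi)], \quad B := [t \mapsto D_{H,3} \phi(t, {\bf x}(\pi)(t), {\bf x}(\pi)'(t), \pi)]$$
both belong to $\mathcal{L}^1([0,T], \mathcal{B}([0,T]), \mathfrak{m}_1; \R^{n*})$, and that for every ${\bf h} \in C^1_{0,0}([0,T], \R^n)$ one has $D_{H,1} \Phi({\bf x}(\pi), \pi) \cdot {\bf h} = \int_{[0,T]} A(t) \cdot {\bf h}(t)\, d\mathfrak{m}_1(t) + \int_{[0,T]} B(t) \cdot {\bf h}'(t)\, d\mathfrak{m}_1(t)$. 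The bridge between the two assertions is the primitive $\Psi(t) := \int_{[0,t]} A(s)\, d\mathfrak{m}_1(s)$, which, by the coordinatewise characterization of absolute continuity recalled in Section~2.2, lies in $AC([0,T], \R^{n*})$ with $\Psi' = A$ $\mathfrak{m}_1$-a.e.\ and $\Psi(0) = 0$.

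Next I would establish the integration by parts identity. Since $\Psi \in AC([0,T],\R^{n*})$ and ${\bf h} \in C^1_{0,0}$, the scalar map $[t \mapsto \Psi(t) \cdot {\bf h}(t)]$ is absolutely continuous with a.e.\ derivative $A(t) \cdot {\bf h}(t) + \Psi(t) \cdot {\bf h}'(t)$; integrating and using $\Psi(0) = 0$ and ${\bf h}(0) = {\bf h}(T) = 0$ yields
$$\int_{[0,T]} A(t) \cdot {\bf h}(t)\, d\mathfrak{m}_1(t) = - \int_{[0,T]} \Psi(t) \cdot {\bf h}'(t)\, d\mathfrak{m}_1(t).$$
Substituting this into the formula above shows that (i) is equivalent to the condition
$$\forall {\bf h} \in C^1_{0,0}([0,T], \R^n), \quad \int_{[0,T]} \big(B(t) - \Psi(t)\big) \cdot {\bf h}'(t)\, d\mathfrak{m}_1(t) = 0.$$

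The implication (ii) $\Rightarrow$ (i) is then immediate: if $B - \Psi = c$ $\mathfrak{m}_1$-a.e.\ for a constant $c \in \R^{n*}$, then $\int_{[0,T]} c \cdot {\bf h}'(t)\, d\mathfrak{m}_1(t) = c \cdot ({\bf h}(T) - {\bf h}(0)) = 0$. For the converse (i) $\Rightarrow$ (ii) I would set $C := B - \Psi \in \mathcal{L}^1([0,T], \mathcal{B}([0,T]), \mathfrak{m}_1; \R^{n*})$ and run the Dubois-Reymond argument. The key observation is that $\{ {\bf h}' : {\bf h} \in C^1_{0,0}([0,T], \R^n) \}$ coincides with the set of ${\bf g} \in C^0([0,T], \R^n)$ satisfying $\int_{[0,T]} {\bf g}\, d\mathfrak{m}_1 = 0$, since the primitive ${\bf h}(t) := \int_{[0,t]} {\bf g}$ lies in $C^1_{0,0}$ exactly when ${\bf g}$ has vanishing mean. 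Putting $\bar{c} := \frac{1}{T} \int_{[0,T]} C\, d\mathfrak{m}_1$ and replacing an arbitrary ${\bf g} \in C^0([0,T], \R^n)$ by its mean-zero part ${\bf g} - \frac{1}{T} \int_{[0,T]} {\bf g}\, d\mathfrak{m}_1$, the hypothesis yields $\int_{[0,T]} (C(t) - \bar{c}) \cdot {\bf g}(t)\, d\mathfrak{m}_1(t) = 0$ for every ${\bf g} \in C^0([0,T], \R^n)$. Applying the classical fundamental lemma coordinatewise against the dual basis of $\R^{n*}$ gives $C = \bar{c}$ $\mathfrak{m}_1$-a.e., that is $B(t) = \Psi(t) + \bar{c}$ $\mathfrak{m}_1$-a.e., which is precisely (ii) with $c = \bar{c}$.

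I expect the main obstacle to be the rigorous justification of the integration by parts step inside the Borel/absolute-continuity framework adopted here — in particular confirming that the product of the $\R^{n*}$-valued absolutely continuous primitive $\Psi$ with the $C^1$ increment ${\bf h}$ obeys the a.e.\ product rule and the fundamental theorem of calculus for absolutely continuous functions — together with the careful verification of the description of the range $\{ {\bf h}' : {\bf h} \in C^1_{0,0} \}$, on which the whole Dubois-Reymond reduction rests.
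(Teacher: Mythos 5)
Your proposal is correct and follows essentially the same route as the paper: introduce the absolutely continuous primitive of $D_{H,2}\phi(\cdot,{\bf x}(\pi)(\cdot),{\bf x}(\pi)'(\cdot),\pi)$, integrate by parts using ${\bf h}(0)={\bf h}(T)=0$ to reduce (i) to $\int_{[0,T]}(\mathfrak{N}(t)-\mathfrak{P}(t))\cdot {\bf h}'(t)\,d\mathfrak{m}_1(t)=0$, and conclude by the DuBois-Reymond lemma. The only difference is presentational: the paper tests against $C^{\infty}_c([0,T],\R^n)$ and cites the DuBois-Reymond lemma from the literature, while you test directly against $C^1_{0,0}([0,T],\R^n)$ and prove that step inline via the mean-zero decomposition of ${\bf h}'$, which is a correct and self-contained substitute.
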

\begin{proof}  Setting $\mathfrak{M}(t) := D_{H,2}\phi(t, {\bf x}(\pi)(t), {\bf x}(\pi)'(t), \pi)$  and\\
 $\mathfrak{N}(t) := D_{H,3}\phi(t, {\bf x}(\pi)(t), {\bf x}(\pi)'(t), \pi)$ when $t \in [0,T]$, using Lemma \ref{lem55}, we obtain that, for all ${\bf h} \in C^{\infty}_c([0,T], \R^n)$, 
\begin{equation}\label{eq517}
D_{H,1} \Phi({\bf x}(\pi), \pi) \cdot h = \int_{[0,T]}(\mathfrak{M}(t) \cdot{\bf h}(t) + \mathfrak{N}(t) \cdot {\bf h}'(t)) \; d \mathfrak{m}_1(t).
\end{equation}
From Remark \ref{rem57}, we know that $\mathfrak{M}, \mathfrak{N} \in \mathcal{L}^1([0,T], \mathcal{B}([0,T]), \mathfrak{m}_1; \R^{n*})$. We define $\mathfrak{P}(t) := \int_{[0,t]} \mathfrak{M}(s) \; d \mathfrak{m}_1(s)$; we have $\mathfrak{P} \in AC([0,T], \R^{n*})$ and $\mathfrak{P}'(t) = \mathfrak{M}(t)$ \; $\mathfrak{m}_1$-a.e. $t \in [0,T]$. When ${\bf h} \in C^{\infty}_c([0,T], \R^n)$, $\mathfrak{P} \cdot {\bf h} \in AC([0,T], \R)$ and the formula of integration by parts (\cite{H}, Annexe) holds: $\int_{[0,T]} \mathfrak{P}'(t) \cdot {\bf h}(t) \; d \mathfrak{m}_1(t) = \mathfrak{P}(T) \cdot {\bf h}(T) - \mathfrak{P}(0) \cdot {\bf h}(0) - \int_{[0,T]} \mathfrak{P}(t) \cdot {\bf h}'(t) \; d \mathfrak{m}_1(t)$, and since ${\bf h}(T) = {\bf h}(0) = 0$, we have
\begin{equation}\label{eq518}
\int_{[0,T]} \mathfrak{P}'(t) \cdot {\bf h}(t) \; d \mathfrak{m}_1(t) =  - \int_{[0,T]} 
 \mathfrak{P}(t) \cdot {\bf h}'(t) \; d \mathfrak{m}_1(t).
\end{equation} 
\vskip1mm
\noindent
$[{\bf (i) \Longrightarrow (ii)}]$ From (i), (\ref{eq517}) and (\ref{eq518}), for all ${\bf h} \in C^{\infty}_c([0,T], \R^n)$, we have
\[
\begin{array}{ccl}
0 & = &  \int_{[0,T]}(\mathfrak{M}(t) \cdot {\bf h}(t) + \mathfrak{N}(t) \cdot {\bf h}'(t)) \; d \mathfrak{m}_1(t)\\
\null & = &  \int_{[0,T]}(\mathfrak{P}'(t) \cdot {\bf h}(t) + \mathfrak{N}(t) \cdot {\bf h}'(t)) \; d \mathfrak{m}_1(t)\\
\null & = &  \int_{[0,T]}(- \mathfrak{P}(t) \cdot {\bf h}'(t) + \mathfrak{N}(t) \cdot {\bf h}'(t)) \; d \mathfrak{m}_1(t)\\
\null & = & \int_{[0,T]}(\mathfrak{N}(t) - \mathfrak{P}(t)) \cdot {\bf h}'(t) \; d \mathfrak{m}_1(t).
\end{array}
\]
Hence using the DuBois-Reymond lemma (\cite{BGH}, Lemma 1.8, p.15) we obtain that there exists  $c \in \R^{n*}$ s.t. $\mathfrak{N}(t) = \mathfrak{P}(t) +c$ \; $\mathfrak{m}_1$-a.e. $t \in [0,T]$ which is (ii).
\vskip1mm
\noindent
$[{\bf (ii) \Longrightarrow (i)}]$ For all ${\bf h} \in C^1_{0,0}([0,T], \R^n)$, note that $\int_{[0,T]} c \cdot {\bf h}'(t) \; d \mathfrak{m}_1(t)=\int_{0,T]} (c \cdot {\bf h})'(t) \; d \mathfrak{m}_1(t) = c \cdot {\bf h}(T) - c \cdot {\bf h}(0) = 0$. Using (ii) and (\ref{eq517}), we have $D_{H,1} \Phi({\bf x}(\pi), \pi) \cdot {\bf h} = \int_{[0,T]} (\mathfrak{M}(t) \cdot {\bf h}(t) + \mathfrak{N}(t) \cdot {\bf h}'(t)) \; d \mathfrak{m}_1(t) = \int_{[0,T]}(\mathfrak{P}'(t) \cdot {\bf h}(t) + \mathfrak{P}(t) \cdot {\bf h}'(t)) \; d \mathfrak{m}_1(t) + \int_{[0,T]} c \cdot {\bf h}'(t) \; d \mathfrak{m}_1(t)$, and using (\ref{eq518}), we obtain $D_{H,1} \Phi(x(\pi), \pi) \cdot {\bf h} = 0$ which is (i).
\end{proof}
\begin{lemma}\label{lem59}
Under (B{\sc sol}1), (B{\sc int}1), (B{\sc int}2) and (B{\sc int}3), the condition (B{\sc con}) is equivalent to the linear independence of $D_{H,1}G_1(x(\pi_0), \pi_0)$, ..., $D_{H,1}G_k(x(\pi_0), \pi_0)$, $D_{H,1}H_1(x(\pi_0), \pi_0)$, ..., $D_{H,1}H_{\ell}(x(\pi_0), \pi_0)$ on $C^1_{0,0}([0,T], \R^n)$.
\end{lemma}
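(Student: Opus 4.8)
The plan is to deduce the whole statement from Lemma \ref{lem58} applied to the combined integrand, exploiting the linearity of the Hadamard differential in the integrand. For $(\lambda, \mu) = ((\lambda_i)_{1 \leq i \leq k}, (\mu_j)_{1 \leq j \leq {\ell}}) \in \R^k \times \R^{\ell}$, I would introduce the functional $\Psi_{\lambda, \mu}(x, \pi) := \sum_{1 \leq i \leq k} \lambda_i G_i(x, \pi) + \sum_{1 \leq j \leq {\ell}} \mu_j H_j(x, \pi) = \int_0^T \psi_{\lambda, \mu}(t, x(t), x'(t), \pi) \, dt$, whose integrand is exactly the $\psi_{\lambda, \mu}$ appearing in (B{\sc con}). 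Lemma \ref{lem55}(b) guarantees that each $D_{H,1} G_i(x(\pi_0), \pi_0)$, $D_{H,1} H_j(x(\pi_0), \pi_0)$ and $D_{H,1} \Psi_{\lambda, \mu}(x(\pi_0), \pi_0)$ exists as a continuous linear form on $C^1([0,T], \R^n)$, and the linearity of the differential yields $D_{H,1} \Psi_{\lambda, \mu}(x(\pi_0), \pi_0) = \sum_{1 \leq i \leq k} \lambda_i D_{H,1} G_i(x(\pi_0), \pi_0) + \sum_{1 \leq j \leq {\ell}} \mu_j D_{H,1} H_j(x(\pi_0), \pi_0)$. Hence the linear independence asserted in the statement is precisely the implication that $D_{H,1} \Psi_{\lambda, \mu}(x(\pi_0), \pi_0) = 0$ on $C^1_{0,0}([0,T], \R^n)$ forces $(\lambda, \mu) = (0,0)$.

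Next I would check that, for every fixed $(\lambda, \mu)$, the integrand $\psi_{\lambda, \mu}$ satisfies conditions (i)--(iv) of Lemma \ref{lem55} (these being the hypotheses of Lemma \ref{lem58}) at $\pi_0$. Condition (i) follows from the continuity of the $\mathfrak{g}_i$ and $\mathfrak{h}_j$ granted by (B{\sc int}2); condition (ii) is exactly the continuity of $[\pi \mapsto x(\pi)]$ at $\pi_0$ from (B{\sc sol}1); condition (iii) holds because (B{\sc int}1) gives each $\mathfrak{g}_i$, $\mathfrak{h}_j$ the common integrable Lipschitz bound $\xi$, so $\psi_{\lambda, \mu}$ admits the integrable Lipschitz bound $(\sum_i |\lambda_i| + \sum_j |\mu_j|)\, \xi$; and condition (iv) follows from (B{\sc int}2) together with the linearity of $D_{H, (2,3,4)}$ in the integrand. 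Thus Lemma \ref{lem58} applies to $\Phi = \Psi_{\lambda, \mu}$, $\phi = \psi_{\lambda, \mu}$ and $\pi = \pi_0$.

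The core step is then the translation furnished by Lemma \ref{lem58}: for each fixed $(\lambda, \mu)$, the condition $D_{H,1} \Psi_{\lambda, \mu}(x(\pi_0), \pi_0) \cdot {\bf h} = 0$ for all ${\bf h} \in C^1_{0,0}([0,T], \R^n)$ is equivalent to the existence of a constant $c \in \R^{n*}$ such that $x(\pi_0)$ solves, $\mathfrak{m}_1$-almost everywhere, the Dubois--Reymond form of the Euler equation for $\psi_{\lambda, \mu}$, the constant $c$ playing the role of $c_{\lambda, \mu}$. Reading (B{\sc con}) contrapositively, it states exactly that no $(\lambda, \mu) \neq (0,0)$ makes $x(\pi_0)$ a solution of this Euler equation; by the previous equivalence this says that no $(\lambda, \mu) \neq (0,0)$ annihilates $D_{H,1} \Psi_{\lambda, \mu}(x(\pi_0), \pi_0)$ on $C^1_{0,0}([0,T], \R^n)$, i.e. that $D_{H,1} \Psi_{\lambda, \mu}(x(\pi_0), \pi_0) = 0$ on $C^1_{0,0}([0,T], \R^n)$ forces $(\lambda, \mu) = (0,0)$. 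Combined with the linearity identity of the first paragraph, this is precisely the claimed linear independence.

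I expect the only genuinely delicate point to be the bookkeeping of the quantifier on $(\lambda, \mu)$: Lemma \ref{lem58} is a statement about a single fixed integrand, so one must apply it to each $\psi_{\lambda, \mu}$ separately and then recombine, and one must make sure the existential constant $c$ of Lemma \ref{lem58}(ii) is identified with the constant $c_{\lambda, \mu}$ appearing in (B{\sc con}). Everything else is the linearity of the differential and a contrapositive rephrasing, both routine.
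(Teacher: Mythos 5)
Your proposal is correct and follows essentially the same route as the paper: both form the combined functional with integrand $\psi_{\lambda,\mu}$, verify hypotheses (i)--(iv) of Lemma \ref{lem55} from (B{\sc sol}1), (B{\sc int}1), (B{\sc int}2), apply Lemma \ref{lem58} to translate the Dubois--Reymond condition into the vanishing of $\sum_i \lambda_i D_{H,1}G_i + \sum_j \mu_j D_{H,1}H_j$ on $C^1_{0,0}([0,T],\R^n)$, and conclude by contraposition. Your extra remarks on the scaled Lipschitz bound $(\sum_i|\lambda_i|+\sum_j|\mu_j|)\xi$ and on the identification of the existential constant $c$ with $c_{\lambda,\mu}$ are accurate fillings-in of details the paper leaves implicit.
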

\begin{proof} We want to use Lemma \ref{lem58} with $\phi = \psi_{\lambda, \mu}$, where $\psi_{\lambda, \mu}$ is defined in (B{\sc con}).
About the assumptions (i-iv) of Lemma \ref{lem55}, note that (i) is a consequence of (B{\sc int}2), (ii) is a consequence of (B{\sc sol}1), (iii) is a consequence of (B{\sc int}1), (iv) is  a consequence of (B{\sc int}2). Hence from Lemma \ref{lem55}, we know that $\Phi : C^1([0,T],M) \times P \rightarrow \R$, defined by $\Phi({\bf x}, \pi) := \int_0^T \phi(t, {\bf x}(t), {\bf x}'(t), \pi) \, dt = \int_0^T  \psi_{\lambda, \mu}(t, {\bf x}(t), {\bf x}'(t), \pi) \, dt$, is Hadamard differentiable at $x(\pi_0)$. To realize a proof, we proceeed doing a double contraposition. The negation of (B6) is equivalent to: $\exists (\lambda, \mu) \in \R^k \times \R^{\ell} \setminus \{(0,0) \}$, $\exists c_{\lambda, \mu} \in \R^{n*}$ s.t.\\
 $D_{H,3} \psi_{\lambda,\mu}(t, x(\pi_0)(t), x(\pi_0)'(t),\pi_0) =$ \\
 $ \int_{[0,t]} D_{H,2} \psi_{\lambda, \mu}(s, x(\pi_0)(s), x(\pi_0)'(s), \pi_0) \; d \mathfrak{m}_1(s) + c_{\lambda, \mu}$ \; $\mathfrak{m}_1$-a.e. $t \in [0,T]$. Using Lemma \ref{lem58}, this last assertion is equivalent to: $\exists (\lambda, \mu) \in \R^k \times \R^{\ell} \setminus \{(0,0) \}$ s.t. $D_{H,1}\Phi (x(\pi_0), \pi_0) = 0$ on $C^1_{0,0}([0,T], \R^n)$, i.e. \\$\sum_{1 \leq i \leq k} \lambda_i D_{H,1} G_i(x(\pi_0), \pi_0) + \sum_{1 \leq j \leq {\ell}} \mu_j D_{H,1} H_j(x(\pi_0), \pi_0) = 0$ on $C^1_{0,0}([0,T], \R^n)$. This last assertion means the linear dependence of $D_{H,1} G_1(x(\pi_0), \pi_0)$, ...,$\\
 $ $D_{H,1} H_{\ell}(x(\pi_0), \pi_0)$ on $C^1_{0,0}([0,T], \R^n)$. 
\end{proof}
%%%%%%%%%%%%%%%%%%%%%%%%%%%%%%%%%%%%%%%%%%%%%%%%%%%%%%%%%%%%
\subsection{The dual space of $C^1_{0,0}([0,T], \R^n)$}
\begin{lemma}\label{lem510}
Let $V$ and $W$ be two real normed spaces. When $(v^*, w^*) \in V^* \times W^*$, we consider the {\sf direct sum} of $v^*$ and $w^*$, $v^* \oplus w^* \in (V \times W)^*$, defined by $v^* \oplus w^*(v,w) := v^*(v) + w^*(w)$ for all $(v,w) \in V \times W$. We define the operator $\mathcal{S} : V^* \times W^* \rightarrow (V \times W)^*$ by setting $\mathcal{S}(v^*, w^*) := v^* \oplus w^*$ when $(v^*, w^*) \in V^* \times W^*$.\\
Then $\mathcal{S}$ is a topological isomorphism from $V^* \times W^*$ onto $(V \times W)^*$.
\end{lemma}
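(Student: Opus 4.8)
The plan is to exhibit $\mathcal{S}$ as a linear bijection equipped with explicit two-sided norm estimates, so that bicontinuity follows directly and no completeness or open mapping theorem is needed (recall $V$ and $W$ are only normed spaces). First I would observe that $\mathcal{S}$ is linear in the pair $(v^*, w^*)$, which is immediate from the definition of the direct sum.

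To establish bijectivity I would construct the inverse by hand. Let $\iota_V : V \to V \times W$, $\iota_V(v) := (v, 0)$, and $\iota_W : W \to V \times W$, $\iota_W(w) := (0, w)$, be the canonical injections, which are linear and continuous. Given $\ell \in (V \times W)^*$, set $v^* := \ell \circ \iota_V \in V^*$ and $w^* := \ell \circ \iota_W \in W^*$. The linearity of $\ell$ then gives, for every $(v,w)$, the identity $\ell(v,w) = \ell(v,0) + \ell(0,w) = v^*(v) + w^*(w) = \mathcal{S}(v^*, w^*)(v,w)$, so $\mathcal{S}$ is surjective. Injectivity follows by evaluating $v^* \oplus w^* = 0$ at $(v,0)$ and at $(0,w)$, which forces $v^* = 0$ and $w^* = 0$ respectively.

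For the topological part I would fix the product norm $\Vert (v,w) \Vert := \max \{ \Vert v \Vert, \Vert w \Vert \}$ on $V \times W$ and the analogous product norm on $V^* \times W^*$. Continuity of $\mathcal{S}$ comes from the bound $|v^*(v) + w^*(w)| \le \Vert v^* \Vert_* + \Vert w^* \Vert_*$ whenever $\Vert (v,w) \Vert \le 1$, which yields $\Vert \mathcal{S}(v^*, w^*) \Vert_* \le \Vert v^* \Vert_* + \Vert w^* \Vert_*$. Continuity of the inverse comes from $\Vert \iota_V \Vert \le 1$ and $\Vert \iota_W \Vert \le 1$, giving $\Vert v^* \Vert_* = \Vert \ell \circ \iota_V \Vert_* \le \Vert \ell \Vert_*$ and likewise $\Vert w^* \Vert_* \le \Vert \ell \Vert_*$, hence $\Vert \mathcal{S}^{-1}(\ell) \Vert = \max \{ \Vert v^* \Vert_*, \Vert w^* \Vert_* \} \le \Vert \ell \Vert_*$. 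Together these two inequalities show $\mathcal{S}$ and $\mathcal{S}^{-1}$ are both continuous, so $\mathcal{S}$ is a topological isomorphism.

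I do not expect a genuine obstacle here; the only point requiring care is the bookkeeping of the product norms, since a different but equivalent choice (for instance the sum norm on $V \times W$) would merely alter the constants in the estimates while leaving the conclusion intact. Because all the bounds are explicit and dimension-free, the argument is valid for arbitrary normed spaces and requires neither $V$ nor $W$ to be complete.
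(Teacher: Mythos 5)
Your proof is correct. Note that the paper does not actually prove this lemma: it cites \cite{ATF} (Lemme 2, p.~114) and remarks that the proof is left there as an easy exercise. Your argument is the standard one and fills that gap completely: the explicit inverse $\ell \mapsto (\ell \circ \iota_V, \ell \circ \iota_W)$ together with the two-sided estimates $\Vert \mathcal{S}(v^*,w^*)\Vert_* \leq \Vert v^*\Vert_* + \Vert w^*\Vert_*$ and $\max\{\Vert v^*\Vert_*, \Vert w^*\Vert_*\} \leq \Vert \ell \Vert_*$ gives bicontinuity directly, and you are right that this avoids any appeal to completeness or to the open mapping theorem, which matters since $V$ and $W$ are only assumed to be normed. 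The only cosmetic caveat is the one you already flagged: the constants depend on the choice of product norms, but any of the usual equivalent choices yields the same conclusion.
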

This result is given in \cite{ATF} (Lemme 2, p.114) where its proof is left as an exercice since it is very easy.
\noindent
We consider $Af([0,T], \R^n)$, the set of the restrictions to $[0,T]$ of the affine functions from $\R$ into $\R^n$. Note that $\alpha \in Af([0,T], \R^n)$ means that there exists $(\eta, \sigma) \in \R^n \times \R^n$ s.t. $\alpha(t) = t \eta + \sigma$ for all $t \in [0,T]$. Clearly $Af([0,T], \R^n)$ is a vector subspace of $C^1([0,T], \R^n)$.
\begin{lemma}\label{lem511}
$C^1([0,T], \R^n) = Af([0,T], \R^n) \oplus C^1_{0,0}([0,T], \R^n)$ (topological direct sum).
\end{lemma}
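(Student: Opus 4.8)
The plan is to exhibit the decomposition explicitly and then verify that the associated projections are continuous. First I would establish that the sum is direct algebraically by checking that the intersection is trivial: if an affine function $\alpha(t) = t\eta + \sigma$ lies in $C^1_{0,0}([0,T], \R^n)$, then $\alpha(0) = \sigma = 0$ and $\alpha(T) = T\eta + \sigma = T\eta = 0$, and since $T > 0$ this forces $\eta = 0$, hence $\alpha = 0$. Thus $Af([0,T], \R^n) \cap C^1_{0,0}([0,T], \R^n) = \{ 0 \}$, which will also guarantee the uniqueness of the forthcoming decomposition.

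Next I would show that $Af([0,T], \R^n) + C^1_{0,0}([0,T], \R^n) = C^1([0,T], \R^n)$ by an explicit construction. Given $x \in C^1([0,T], \R^n)$, I would introduce the affine interpolant of $x$ at the endpoints,
\[
\alpha_x(t) := x(0) + \tfrac{t}{T}\,(x(T) - x(0)), \qquad t \in [0,T],
\]
which belongs to $Af([0,T], \R^n)$ with $\eta = \tfrac{1}{T}(x(T) - x(0))$ and $\sigma = x(0)$. Setting $h_x := x - \alpha_x$, a direct evaluation at $0$ and $T$ gives $h_x(0) = 0$ and $h_x(T) = 0$, so $h_x \in C^1_{0,0}([0,T], \R^n)$. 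Hence $x = \alpha_x + h_x$ realizes the algebraic direct sum, the two summands being uniquely determined by the trivial intersection established above.

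It then remains to prove that this algebraic direct sum is topological, i.e. that the projection $P : x \mapsto \alpha_x$ onto $Af([0,T], \R^n)$ is continuous for the $C^1$ norm (the complementary projection $\mathrm{Id} - P$, whose range is $C^1_{0,0}([0,T], \R^n)$, being then continuous as well). This is where the only genuine work lies, though it is light. The evaluation functionals $[x \mapsto x(0)]$ and $[x \mapsto x(T)]$ are continuous since $\Vert x(0) \Vert, \Vert x(T) \Vert \leq \Vert x \Vert_{\infty} \leq \Vert x \Vert_{C^1}$, so $\eta$ and $\sigma$ depend linearly and boundedly on $x$. Using $\alpha_x'(t) = \eta$ and $\Vert \alpha_x \Vert_{\infty} \leq T \Vert \eta \Vert + \Vert \sigma \Vert$, I would bound $\Vert \alpha_x \Vert_{C^1} = \Vert \alpha_x \Vert_{\infty} + \Vert \alpha_x' \Vert_{\infty} \leq (T+1)\Vert \eta \Vert + \Vert \sigma \Vert \leq C \, \Vert x \Vert_{C^1}$ for a suitable constant $C$, which yields the continuity of $P$.

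The main (and essentially only) obstacle is the topological assertion, but it reduces to the boundedness of the two endpoint evaluations together with the fact that $Af([0,T], \R^n)$ is finite dimensional of dimension $2n$; once $P$ is shown to be bounded, the closedness of both summands and the splitting of the topology follow at once, so that the direct sum is topological.
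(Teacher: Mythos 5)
Your proof is correct, and the algebraic part (trivial intersection, plus the explicit decomposition $x = \alpha_x + h_x$ with $\alpha_x$ the affine interpolant of $x$ at the endpoints) coincides with the paper's. Where you genuinely diverge is in the topological step. The paper does not estimate the projection at all: it observes that $Af([0,T],\R^n)$ is complete (being finite dimensional) and that $C^1_{0,0}([0,T],\R^n)$ is closed in the complete space $C^1([0,T],\R^n)$, and then invokes a general result (Lang, \emph{Real and Functional Analysis}, Corollary 1.5, p.~388) asserting that an algebraic direct sum of such subspaces is automatically topological --- an argument that ultimately rests on the open mapping theorem. You instead prove continuity of the projection $P: x \mapsto \alpha_x$ directly, via the boundedness of the endpoint evaluations $x \mapsto x(0)$ and $x \mapsto x(T)$ in the $C^1$ norm and the elementary estimate $\Vert \alpha_x \Vert_{C^1} \leq (T+1)\Vert \eta \Vert + \Vert \sigma \Vert \leq C \Vert x \Vert_{C^1}$. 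Your route is more elementary and self-contained: it needs neither the completeness of the ambient space nor any Banach-space machinery, and it produces an explicit bound on the norm of the projection. The paper's route is shorter on the page and would generalize to situations where no explicit formula for the complement is available, but for this concrete pair of subspaces your direct verification is arguably the more transparent argument. Both are complete proofs.
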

\begin{proof} When $\varphi \in Af([0,T], \R^n) \cap  C^1_{0,0}([0,T], \R^n)$, we have $\varphi (t) = t \eta + \sigma$ for all $t \in [0,T]$. Since $\varphi \in C^1_{0,0}([0,T], \R^n)$ we have $0 = \varphi(0) = \sigma$ and $0 = \varphi(T) = T \eta + \sigma$ which imply $\sigma = \eta = 0$, and consequently $\varphi = 0$. Hence we have established the following property on the algebraic direct sum : $ Af([0,T], \R^n) \oplus^a C^1_{0,0}([0,T], \R^n) \subset C^1([0,T], \R^n)$. When $\varphi \in C^1([0,T], \R^n)$, we set $\eta := - \frac{1}{T}( \varphi(0) - \varphi(T))$, $\sigma := \varphi(0)$ and $\alpha(t) := t \eta + \sigma$ for all $t \in [0,T]$. Hence we have $\alpha \in Af([0,T], \R^n)$. We define $\psi : [0,T] \rightarrow \R^n$ by setting $\psi(t) := \varphi(t) + \frac{t}{T}(\varphi(0) - \varphi(T)) - \varphi(0)$ for all $t \in [0,T]$. Note that $\psi \in C^1([0,T], \R^n)$, $\psi(0) = \varphi(0) - \varphi(0) = 0$, and $\psi(T) = \varphi(T) + (\varphi(0) - \varphi(T)) - \varphi(0) = 0$. Hence we have $\psi \in C^1_{0,0}([0,T], \R^n)$. We see that $\alpha(t) + \psi(t) = \varphi(t)$ for all $t \in [0,T]$, i.e. $\alpha + \psi = \varphi$. And so we have proven that $ Af([0,T], \R^n) \oplus^a C^1_{0,0}([0,T], \R^n) = C^1([0,T], \R^n)$. Since $dim Af([0,T], \R^n) < + \infty$ the subspace $Af([0,T], \R^n)$ is complete, since  $C^1_{0,0}([0,T], \R^n)$ is closed in $C^1([0,T], \R^n)$ which is complete, when obtain the announced conclusion (cf. \cite{La2}, Corollary 1.5, p.388)
\end{proof}
From this lemma, we can write
\begin{equation}\label{eq519}
 C^1([0,T],\R^n) =  Af([0,T], \R^n) \times C^1_{0,0}([0,T], \R^n).
\end{equation}
Using Lemma \ref{lem510} for $V = Af([0,T], \R^n)$ and $W =  C^1_{0,0}([0,T], \R^n)$, and denoting $\mathcal{S}_1 : (Af([0,T], \R^n))^* \times ( C^1_{0,0}([0,T], \R^n))^* \rightarrow (C^1([0,T], \R^n))^*$, defined by $\mathcal{S}_1 (\chi, \Lambda) := \chi \oplus \Lambda$, we obtain
\begin{equation}\label{eq520}
\mathcal{S}_1 \in Isom ((Af([0,T], \R^n))^* \times ( C^1_{0,0}([0,T], \R^n))^*, (C^1([0,T], \R^n))^*).
\end{equation}
Using Lemma \ref{lem510} for $V = \R^n$ and $W = C^0([0,T], \R^n)$, and denoting $\mathcal{S}_2 : (\R^n)^* \times (C^0([0,T], \R^n))^* \rightarrow (\R^n \times C^0([0,T], \R^n))^*$, defined by $\mathcal{S}_2(\beta, \Theta) := \beta \oplus \Theta$, we obtain
\begin{equation}\label{eq521}
\mathcal{S}_2 \in Isom((\R^n)^* \times (C^0([0,T], \R^n))^*, (\R^n \times C^0([0,T], \R^n))^*).
\end{equation}
The operator $\mathcal{T} : C^1([0,T], \R^n) \rightarrow \R^n \times C^0([0,T], \R^n)$, defined by $\mathcal{T}(x) := (x(0),x')$, is a topological isomorphism. Hence (cf. \cite{Fr}, Theorem 4.13.4, p.173), its adjoint satisfies the following property.
\begin{equation}\label{eq522}
\mathcal{T}^* \in Isom((\R^n \times C^0([0,T], \R^n)^*, (C^1([0,T], \R^n))^*).
\end{equation}
We denote by $\mathcal{R}_F : (\R^n)^* \rightarrow \R^n$ the isomorphism of F. Riesz and Fr\'echet (\cite{Ru}, p. 81), and by $\mathcal{R}_M : (C^0([0,T], \R^n))^* \rightarrow NBV([0,T], \R^n)$ the topological isomorphism of F. Riesz and Markov (\cite{KF}, p. 365). We define the operator $(\mathcal{R}_F, \mathcal{R}_M) : (\R^n)^* \times (C^0([0,T], \R^n))^* \rightarrow \R^n \times NBV([0,T], \R^n)$, defined by $(\mathcal{R}_F, \mathcal{R}_M)(\beta, \Theta) := (\mathcal{R}_F(\beta),  \mathcal{R}_M (\Theta))$. We easily verify the following property.
\begin{equation}\label{eq523}
(\mathcal{R}_F, \mathcal{R}_M) \in Isom((\R^n)^* \times (C^0([0,T], \R^n))^* , \R^n \times NBV([0,T], \R^n)).
\end{equation}
Now we can establish the following result.
\begin{lemma}\label{lem512} There exists an inner product on $(C^1_{0,0}([0,T], \R^n)^* \times (C^1_{0,0}([0,T], \R^n)^*$ which is continuous with respect to the usual topology of the product space.
\end{lemma}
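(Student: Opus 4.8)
The plan is to recognize that this lemma is exactly the verification of a condition of type (A\textsc{dua}) for the space $C^1_{0,0}([0,T], \R^n)$, and to produce the inner product by transport from a Hilbert space. The guiding principle is elementary: if $E$ is a real normed space and $\Phi : E \to H$ is a continuous linear \emph{injection} into a real Hilbert space $(H, \langle \cdot, \cdot \rangle_H)$, then $b(\xi, \eta) := \langle \Phi(\xi), \Phi(\eta) \rangle_H$ defines a continuous inner product on $E$: bilinearity and symmetry are inherited from $\langle \cdot, \cdot\rangle_H$, positive-definiteness $b(\xi,\xi) = \Vert \Phi(\xi) \Vert_H^2 > 0$ for $\xi \neq 0$ follows from the injectivity of $\Phi$, and the bound $\vert b(\xi, \eta) \vert \leq \Vert \Phi \Vert^2 \Vert \xi \Vert \, \Vert \eta \Vert$ shows that $b \in C^0(E \times E, \R)$. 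Thus it suffices to construct a continuous linear injection of $E := (C^1_{0,0}([0,T], \R^n))^*$ into some real Hilbert space.

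First I would embed $E$ into $(C^1([0,T], \R^n))^*$ and then identify the latter with a concrete space. The map $j : E \to (Af([0,T], \R^n))^* \times E$, $j(\Lambda) := (0, \Lambda)$, is a continuous linear injection, so by (\ref{eq520}) the composite $\mathcal{S}_1 \circ j$ is a continuous linear injection of $E$ into $(C^1([0,T], \R^n))^*$. Next, combining (\ref{eq522}), (\ref{eq521}) and (\ref{eq523}), the operator
\[
\Xi := (\mathcal{R}_F, \mathcal{R}_M) \circ \mathcal{S}_2^{-1} \circ (\mathcal{T}^*)^{-1}
\]
is a topological isomorphism from $(C^1([0,T], \R^n))^*$ onto $\R^n \times NBV([0,T], \R^n)$. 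Composing, $\Xi \circ \mathcal{S}_1 \circ j$ is a continuous linear injection of $E$ into $\R^n \times NBV([0,T], \R^n)$. Since $\R^n$ is already a Hilbert space, the construction now reduces to exhibiting a continuous linear injection of $NBV([0,T], \R^n)$ into a real Hilbert space.

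For this last step I would use the map $\mathcal{J} : NBV([0,T], \R^n) \to L^2([0,T], \R^n) \times \R^n$ defined by $\mathcal{J}(g) := (g, g(T))$, the target being a Hilbert space as a product of two Hilbert spaces. Continuity follows from $\Vert g \Vert_\infty \leq V_0^T(g) = \Vert g \Vert_{BV}$ (valid because $g(0) = 0$), which gives $\Vert \mathcal{J}(g) \Vert^2 \leq (T+1) \Vert g \Vert_{BV}^2$. The delicate point --- and the one I expect to be the main obstacle --- is injectivity, because a single endpoint is invisible to the $L^2$-norm. If $\mathcal{J}(g) = 0$, then $g = 0$ $\mathfrak{m}_1$-a.e.\ and $g(T) = 0$; using the right-continuity of $g$ on $\,]0,T[\,$ together with the density of $\{ t \in [0,T] : g(t) = 0 \}$, one gets $g(t) = 0$ for every $t \in \,]0,T[\,$, and the normalization $g(0) = 0$ together with $g(T) = 0$ then forces $g \equiv 0$. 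It is precisely to capture the value at $T$, which the normalization does not pin down, that the extra coordinate $g(T)$ is appended; without it the map fails to be injective.

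Finally I would assemble $\Phi := (\mathrm{id}_{\R^n} \times \mathcal{J}) \circ \Xi \circ \mathcal{S}_1 \circ j : E \to \R^n \times L^2([0,T], \R^n) \times \R^n =: H$, which is a continuous linear injection into the Hilbert space $H$ as a composition of such maps, and set $(\Lambda_1 \mid \Lambda_2) := \langle \Phi(\Lambda_1), \Phi(\Lambda_2) \rangle_H$. By the principle stated at the outset this is a continuous inner product on $E = (C^1_{0,0}([0,T], \R^n))^*$, which is the assertion of the lemma.
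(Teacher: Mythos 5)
Your proposal is correct and follows essentially the same route as the paper: the same injection $\Lambda \mapsto (0,\Lambda)$, the same composite $(\mathcal{R}_F,\mathcal{R}_M)\circ\mathcal{S}_2^{-1}\circ(\mathcal{T}^*)^{-1}\circ\mathcal{S}_1$ into $\R^n\times NBV([0,T],\R^n)$, and your pullback $\langle \Phi(\cdot),\Phi(\cdot)\rangle_H$ through $\mathcal{J}(g)=(g,g(T))$ yields exactly the paper's bilinear form $(\ref{eq527})$. The only (welcome) difference is that you spell out the positive-definiteness via the right-continuity and normalization of $NBV$ functions, a verification the paper leaves implicit.
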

\begin{proof} We consider the operator
\begin{equation}\label{eq524}
in : (C^1_{0,0}([0,T], \R^n)^* \rightarrow (Af([0,T], \R^n))^* \times (C^1_{0,0}([0,T], \R^n)^*, \; in(\Lambda) := (0, \Lambda).
\end{equation}
This operator is linear continuous and injective. We introduce the following operator: 
\begin{equation}\label{eq525}
\Gamma := (\mathcal{R}_F, \mathcal{R}_M) \circ \mathcal{S}_2^{-1} \circ (\mathcal{T}^*)^{-1} \circ \mathcal{S}_1 \circ in.
\end{equation}
From (\ref{eq520}), (\ref{eq521}), (\ref{eq522}), (\ref{eq523}) and (\ref{eq524}), we obtain that $\Gamma$ is linear continuous and injective. Now we build the operator $\Delta$  from $(C^1_{0,0}([0,T], \R^n)^* \times (C^1_{0,0}([0,T], \R^n)^*$ into $(\R^n \times NBV([0,T], \R^n) \times (\R^n \times NBV([0,T], \R^n))$ by setting 
\begin{equation}\label{eq526}
\Delta := (\Gamma \circ pr_1, \Gamma \circ pr_2)
\end{equation}
where $pr_1$ and $pr_2$ are the projections of $(C^1_{0,0}([0,T], \R^n)^* \times (C^1_{0,0}([0,T], \R^n)^*$. From (\ref{eq525}), we obtain that $\Delta$ is continuous on $(C^1_{0,0}([0,T], \R^n)^* \times (C^1_{0,0}([0,T], \R^n)^*$. \\
We consider the inner product $(\cdot \mid \cdot)_0$ on $\R^n \times NBV([0,T], \R^n)$ defined by 
\begin{equation}\label{eq527}
((\xi_1,g_1) \mid (\xi_2,g_2))_0 := (\xi_1 \mid \xi_2)_{\R^n} + \int_0^T(g_1(t) \mid g_2(t))_{\R^n} \; dt + (g_1(T) \mid g_2(T))_{\R^n}.
\end{equation}
This inner product is continuous withe respect to the usual norm of  \\
$\R^n \times NBV([0,T], \R^n)$. The functional $( \cdot \mid \cdot) := ( \cdot \mid \cdot)_0 \circ \Delta$ is an inner product on $(C^1_{0,0}([0,T], \R^n)^* \times (C^1_{0,0}([0,T], \R^n)^*$ which is continuous as a composition of continuous mappings.
\end{proof}
%%%%%%%%%%%%%%%%%%%%%%%%%%%%%%%%%%%%%%%%%%%%%
\subsection{Proof of Theorem \ref{th51}} {\bf Conclusion (I)}. 
Our strategy is to use Corollary 3.2. We start by doing the dictionary between the notation of Corollary \ref{cor32} and the notation of Theorem \ref{th51}. Let $\X := C^1([0,T], \R^n)$, $\A := C^1_{a_0,a_T}([0,T], \R^n)$, $\S := C^1_{0,0} ([0,T], \R^n)$, $\G := C^1([0,T], M)$, $f^0(x,\pi) = J(x,\pi) = \int_0^T L(t, x(t), x'(t), \pi) \, dt$, $g_i^0(x,\pi) = G_i(x, \pi) = \int_0^T \mathfrak{g}_i(t, x(t), x'(t), \pi)  dt$ when $1 \leq i \leq k$, $h^0_j(x, \pi) = H_j(x, \pi) =$\\
$ \int_0^T \mathfrak{h}_j(t, x(t), x'(t), \pi) \,  dt$ when $1 \leq j \leq {\ell}$, and $\underline{z}(\pi) = x(\pi)$. \\
Now we consider the assumptions of Corollary \ref{cor32}. ($A^0${\sc dua}) is fulfilled by using Lemma \ref{lem512}. ($A^0${\sc sol}1) is a consequence of (B{\sc sol}1), and ($A^0${\sc sol}2) is a consequence of (B{\sc sol}2).\\
To show that ($A^0${\sc fon}1) is fulfilled, we want to use Lemma \ref{lem55}, hence we ought to prove that the assumptions of Lemma \ref{lem55} are fulfilled. In the proof of Lemma \ref{lem59}, we have noted that (B{\sc sol}1), (B{\sc int}1), (B{\sc int}2) and (B{\sc int}3) ensure that the conditions (i-v) of Lemma \ref{lem55} are fulfilled and consequently the conclusions (a), (b) and (c) of Lemma \ref{lem55} hold. Hence ($A^0${\sc fon}1) results from the conclusions (b) and (c) of Lemma \ref{lem55}. ($A^0${\sc con}1) results from the conclusion (a) of lemma \ref{lem55}. ($A^0${\sc con}2) results from (B{\sc con}) and of Lemma \ref{lem58}. Then we can use the conclusion ($\alpha$) of Corollary \ref{cor32} which permits to ensure that the conclusion (I) of Theorem \ref{th51} is proven.\\
{\bf Conclusion (II)}. In the proof of the conclusion (I), we have yet proven that ($A^0${\sc sol}1), ($A^0${\sc sol}2), ($A^0${\sc fon}1), ($A^0${\sc con}1) and ($A^0${\sc con}2) are fulfilled. Replacing (B{\sc sol}2) by (B{\sc sol}2-bis), ($A^0${\sc sol}2-bis) is fulfilled, and we can use the conclusion ($\beta$) of Corollary \ref{cor32} to obtain the conclusion (II) of Theorem \ref{th51}.\\
{\bf Conclusion (III)}. After the proofs of the previous conclusions, we know that the assumptions (i), (ii), (iii), (iv) and (v) of Lemma \ref{lem55} are fulfilled. Note that (B{\sc int}3-bis) implies that assumption (vi) of Lemma \ref{lem55} holds. Also note that (B{\sc int}4) implies that the assumption (vii) of Lemma \ref{lem55} holds. Hence we can use the conclusion (d) of Lemma \ref{lem55}. From this conclusion (d) we deduce that ($A^0${\sc fon}2) holds. Note that (B{\sc sol}2-ter) implies ($A^0${\sc sol}2-ter). Hence we can use the conclusion ($\gamma$) of Corollary \ref{cor32} to obtain the conclusion (III) of Theorem \ref{th51}.
                                                                                                                                                                          
%%%%%%%%%%%%%%%%%%%%%%%%%%%%%%%%%
%\section*{References}
%

\end{document}